\newcommand{\mpar}{\par \medskip \par }
\newcommand{\Div}{\operatorname{div}}
\newcommand{\Hess}{\operatorname{Hess}}
\newcommand{\Ric}{\operatorname{Ric}}
\numberwithin{equation}{section}
\theoremstyle{definition}
\newtheorem{theorem}{Theorem}[section]
\newtheorem{lemma}[theorem]{Lemma}
\newtheorem{proposition}[theorem]{Proposition}
\newtheorem{definition}[theorem]{Definition}
\newtheorem{remark}[theorem]{Remark}
\newtheorem{example}[theorem]{Example}
\begin{document}

\title{Bochner's Identity on Graphs}

\author{Peter March}

\begin{abstract}
We prove an identity on a graph analogous to Bochner's identity on a Riemannian manifold. An auxiliary graph called the complete tangent graph intervenes in the term corresponding to Ricci curvature.
\end{abstract}

\address{Department of Mathematics\\
Rutgers University\\
Hill Center - Busch Campus\\
110 Frelinghuysen Road\\
Piscataway, NJ 08854-8019}

\email{march@math.rutgers.edu}

\maketitle

\section{Introduction}
Bochner's identity states that if $M$ is a Riemannian manifold and $u\colon M\to \mathbb{R}$ is a smooth function then,
$$
\tfrac{1}{2}\Delta|\nabla u|^2=\langle\nabla\Delta u, \nabla u\rangle + |\Hess(u)|^2 + \Ric(\nabla u, \nabla u).
$$
Here $\langle\cdot, \cdot\rangle$ is the Riemannian inner product on the tangent bundle of $M$; $\nabla, \Delta$, and $\Hess$ are the corresponding gradient, Laplace-Beltrami, and Hessian operators; and $\Ric$ is the Ricci curvature of the Levi-Civita connection.

\mpar
We prove an analogous identity on a graph $G$ that states if $\phi\colon G\to \mathbb{R}$ then,
$$
\tfrac{1}{2}\Delta |\nabla\phi |^2 = \langle\nabla\Delta\phi, \nabla\phi\rangle + |\Hess_{\tau G}(\phi)|^2 +B(\nabla\phi, \nabla\phi),
$$
where $B$ is a symmetric, function-valued bilinear form on vector fields on $G$,
\begin{align*}
B(X, Y)(i) = \tfrac{1}{2}\langle X, &(\Delta_{\tau G}-\nabla\Div)Y\rangle_{T_i(G)} + \tfrac{1}{2}\langle Y, (\Delta_{\tau G} -\nabla\Div)X\rangle_{T_i(G)}\\ 
& - 2\langle \nabla_{\tau G}X, \nabla_{\tau G}Y\rangle_{\mathcal{T}_i^2(G)}
\end{align*}
and $i\in V_G$ ranges over the vertices of $G$. 

\mpar
By implication there are analogues of tangent bundles, vector fields, inner products, as well as gradient, divergence, Laplacian, and Hessian operators. The new ingredient is an auxiliary graph $\tau G$ called the complete tangent graph of $G$. 

\mpar
Evidently, $B$ plays the role of Ricci curvature in the identity. We'll see that the real-valued form obtained by integration over $G$ is,
$$
B(X, Y) = \sum_{i\in V_G} B(X,Y)(i)= -\langle X, \left(\Delta_{\tau G} + \nabla\Div\right) Y\rangle_{\mathcal{X}(G)}
$$
and it defines an operator analogous to the dual of the Hodge operator acting on 1-forms of $M$. When the integral of $B$ is specialized to gradient vector fields, it descends to a form on $C(G)$ and we find,
$$
B(\phi, \psi) = \sum_{i\in V_G} B(\nabla\phi,\nabla\psi)(i) = \langle \phi, B\psi\rangle_{C(G)}
$$
where the operator $B$ can be written explicitly in terms of the original graph $G$.

\mpar
There is a large literature on extending ideas from classical differential geometry to discrete settings. To place this article in the literature, recall works like \cite{LY}, \cite{LLY}, \cite{CLY}, and \cite{BCLL} that use so-called curvature-dimension estimates of the Bakry-Emery form,
$$
\Gamma_2(\phi, \psi) = \tfrac{1}{2}[\Delta\Gamma(\phi, \psi) -\Gamma(\phi,\Delta\psi) -\Gamma(\psi, \Delta\phi)]\\
$$ 
to derive eigenvalue estimates of the graph Laplacian. Since,
$$
\Gamma(\phi,\psi) = \tfrac{1}{2}[\Delta(\phi\psi)-\phi\Delta\psi -\psi\Delta\phi] = -\langle\nabla\phi, \nabla\psi\rangle,
$$
(cf. Proposition 3.5.4) it's easy to see that Bochner's identity is equivalent to a formula for the Bakry-Emery form. Lin and Yau calculated $\Gamma_2$ explicitly in Lemma 2.1 of \cite{LY}, so the present work provides a geometric interpretation of their formula. (Note that their normalization of the Laplacian is different from ours so direct comparison requires some care). There are several distinct notions of curvature on graphs and it would be interesting to see how the form $B(X,Y)$ relates to them. For a general review of discrete curvatures see \cite{SSWJ}. 

\mpar
The novelty of our approach is the observation that forming auxiliary graphs from the directed edges of a given graph and iterating them creates unexpected order where none seemed to exist at first sight. This order helps clarify how the neighborhoods of vertices relate to one another and greatly facilitates calculation.

\mpar
In the next two sections we present ideas, notations, and calculations sufficient to make the terms of argument precise.  Many of these items will be familiar, even part of the folklore. But the concept of tangent graphs and results depending on them in an essential way appear to be new. We include some results that are not strictly necessary to prove the identity in order to familiarize the reader with the basic ideas and methods. With these things in hand the proof of the identity is a straightforward calculation. The arguments are elementary and self-contained.

\section{Tangent Graphs}
Imagine a particle walking from vertex to vertex on a graph by traversing edges in one direction or the other. This suggests we think of directed edges as tangent directions and that some tangent directions precede or suceed others. Let's formalize this idea by constructing an  auxiliary graph, called the tangent graph, whose vertices are directed edges of the given graph and whose edges are certain pairs of directed edges of the given graph - a sort of oriented line graph.

\mpar
Let $G=(V_G, E_G)$ be a finite simple graph with vertex set $V_G$, edge set $E_G$, and adjacency matrix $A$. Thus, $A\colon V_G\times V_G\to \{0,1\}$ is symmetric and
$$
E_G = \{\{i,j\}\mid A(i,j)=1\}.
$$

If $\{i,j\}$ is an edge we use the symbol $ij$ to denote the edge directed from $i$ to $j$ and $ji$ to denote the edge with the opposite direction. So, $ij$ is shorthand for the ordered pair $(i,j)\in V_G\times V_G$ when $A(i,j)=1.$

\mpar
Let $V_* = \{ ij\mid \{i,j\}\in E_G\}$ be the vertex set of the tangent graph and consider what a natural adjacency relation between the vertices might be. Clearly, $\{ij, jk\}$ should be an edge of $tG$ because it represents forward motion from $i$ to $j$ and then another forward motion from $j$ to $k$ so we should start with the edge set $E_ *= \{ \{ij, jk\}\mid ij, jk \in V_*\}$. Our approach is to begin with the graph $G_* = (V_*, E_*)$ and consider adding edges to $E_*$ so that certain natural functions on $V_G$ and $V_*$ become graph homomorphisms. We shall see this leads to a pair of tangent graphs $tG\subset\tau G$ where $tG$ and $\tau G$ are extremal in various senses. We define these graphs formally, then justify the definitions.

\begin{definition} 
1. Let $G=(V_G, E_G)$ be a finite simple graph and let $G_*=(V_*, E_*)$ be the graph having vertex set,
$$ 
V_* = \{ij\mid \{i, j\}\in E_G\}
$$
and edge set,
$$
E_* = \{ \{ij, jk\}\mid ij, jk\in V_*\}.
$$

\mpar
2. Let $\sigma\colon V_*\to V_*$ be the \textit{involution} $\sigma(ij)=ji$ and let $\pi, \pi_+\colon V_*\to V_G$ be the \textit{projections},
$$
\pi(ij)=i \quad \text{and}\quad \pi_+(ij)=j,
$$

so that $\pi_+=\pi\circ\sigma$.

\mpar
3. If $u\in V_{tG}$ is a directed edge then we say $\pi(u)$ is the \textit{base point} and $\pi_+(u)$ is the \textit{end point} of $u$. 

\mpar
4. We say that a function $s\colon V_G\to V_*$ is a \textit{vertex section} provided $\pi\circ s (i) = i$ for all $i\in V_G$. The set of all vertex sections is denoted $S_G$.

\mpar
5. The \textit{tangent graph} of $G$ is the graph $tG$ having vertex set $V_{tG}= V_*$ and edge set
$$
E_{tG} = \{\{u,v\}\mid u, v\in V_*,\,\,\pi_+(u)=\pi(v) \,\, \text{or}\,\, \pi(u)=\pi_+(v)\}.
$$

\mpar
6. The \textit{complete tangent graph} of $G$ is the graph $\tau G$ having vertex set $V_{\tau G}=V_*$ and edge set
$$
E_{\tau G} = \{\{u, v\}\mid u, v\in V_*,\,\,\{\pi(u), \pi(v)\}\in E_G\}.
$$
\end{definition}

\begin{remark}
In other words, two directed edges of $G$ are adjacent in $tG$ if and only if the end point of one directed edge is the base point of the other. On the other hand, two directed edges are adjacent in $\tau G$ if and only if their base points are adjacent in $G$.
\end{remark}

\begin{proposition}
1. Let $\mathcal{A}$ be the set of graphs $H=(V_*, E_H)$ such that $E_*\subset E_H$ and $\sigma\colon H\to H$ is a homomorphism. Then,
$$
\bigcap_{H\in\mathcal{A}} E_H = E_{tG},
$$
and the involution $\sigma\colon tG\to tG$ as well as the projections $\pi, \pi_+\colon tG\to G$ are graph homomorphisms.

\mpar
2. Let $\mathcal{B}$ be the set of graphs $H=(V_*, E_H)$ such that $\pi\colon tG\to G$ is a homomorphism. Then,
$$
\bigcup_{H\in\mathcal{B}} E_H =  E_{\tau G}.
$$

3. Let $\mathcal{C}$ be the set of graphs $H=(V_*, E_H)$ such that every vertex section $s\in S_G$ is a homomorphism $s\colon G\to H.$ Then,
$$
\bigcap_{H\in\mathcal{C}} E_H =  E_{\tau G}.
$$

4. $tG$ is a subgraph of $\tau G$ and $\pi\colon \tau G\to G$ is a homomorphism but neither $\pi_+\colon \tau G\to G$ nor $\sigma\colon \tau G\to \tau G$ are homomorphisms, in general.

\mpar
5. $tG = \tau G$ if and only if $G$ is a star graph $K_{1,n},\, n\geq 1.$
\end{proposition}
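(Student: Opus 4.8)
\textit{Proof plan.} Part~4 already gives the inclusion $tG\subseteq\tau G$, and the two graphs share the vertex set $V_*$, so establishing $tG=\tau G$ amounts to proving the reverse inclusion $E_{\tau G}\subseteq E_{tG}$. The plan is to recast this inclusion as an elementary degree condition on $G$, and then to identify the connected graphs meeting that condition. Connectedness of $G$ is genuinely needed here: for two disjoint edges one checks directly that $tG=\tau G$ although $G$ is not a star, so we work under the hypothesis that $G$ is connected (and has at least one edge, the edgeless case being vacuous).

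The first step is to unwind the two edge definitions on a candidate pair. Writing $u=ik$ and $v=j\ell$ in $V_*$, we have $\{u,v\}\in E_{\tau G}$ exactly when $\{i,j\}\in E_G$, while $\{u,v\}\in E_{tG}$ exactly when $k=j$ or $\ell=i$; moreover $\{i,j\}\in E_G$ forces $i\neq j$, so $u\neq v$ and such a pair really is an edge. Hence $E_{\tau G}\not\subseteq E_{tG}$ if and only if $G$ has an edge $\{i,j\}$ together with a neighbor $k\neq j$ of $i$ and a neighbor $\ell\neq i$ of $j$, i.e. an edge both of whose endpoints have degree at least $2$. Negating, $tG=\tau G$ holds if and only if \emph{every edge of $G$ is incident to a vertex of degree one}.

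It then remains to see that a connected graph $G$ with at least one edge has this last property precisely when $G=K_{1,n}$ for some $n\geq1$. One direction is immediate: in $K_{1,n}$ every edge joins the center to a leaf. For the converse, assume every edge of $G$ meets a degree-one vertex. If all vertices have degree $\leq1$, connectedness forces $G=K_2=K_{1,1}$. Otherwise fix $v$ with $\deg(v)\geq2$; every edge at $v$ must carry a degree-one endpoint, and that endpoint is not $v$, so all neighbors of $v$ are leaves. Then no second vertex $w$ can have $\deg(w)\geq2$: such a $w$ is not a neighbor of $v$, yet any $v$--$w$ path leaves $v$ into a leaf and stops there, contradicting connectedness. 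So $v$ is the unique vertex of degree $\geq2$, and connectedness forces every remaining vertex to be a leaf adjacent to $v$; thus $G=K_{1,\deg(v)}$.

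The calculation itself is light; the step I expect to require the most care is the graph-theoretic converse in the last paragraph, where connectedness has to be used twice --- once to exclude a second high-degree vertex and once to attach every remaining vertex directly to the center --- and where the small cases ($G$ a single edge, and the standing assumption that $G$ is connected) must be tracked so that the exponent range $n\geq1$ comes out exactly.
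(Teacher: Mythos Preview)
Your argument for Part~5 is correct and lands on the same obstruction the paper uses: an edge $\{u,v\}\in E_{\tau G}\setminus E_{tG}$ exists precisely when $G$ contains three consecutive edges $\{k,i\},\{i,j\},\{j,\ell\}$ with $k\neq j$ and $\ell\neq i$, i.e.\ an edge both of whose endpoints have degree at least~$2$. The paper phrases this as ``$G$ has no path of length three'' and gets there in two stages (first ruling out cycles to force $G$ to be a tree, then ruling out long paths in that tree), whereas you go straight to the degree condition ``every edge meets a leaf'' and then characterize connected graphs with that property. Your route is a little more economical, since the cycle/tree split in the paper is not really needed --- the same three-edge configuration witnesses failure whether or not it sits inside a cycle.

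You also correctly flag a point the paper leaves implicit: the equivalence as stated requires $G$ to be connected. Your example of two disjoint edges (a disjoint union of two copies of $K_{1,1}$) satisfies $tG=\tau G$ without being a single star $K_{1,n}$; the honest statement for general $G$ is that $tG=\tau G$ if and only if every connected component of $G$ is a star. The paper's proof tacitly assumes connectedness at the step ``$G$ has paths of length at most two, meaning $G$ is a star graph,'' and you are right to make that hypothesis explicit.
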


\begin{proof}
If $E_*\subset E_H$ and $\sigma\colon H\to H$ is a homomorphism then $E_H$ must contain
\begin{align*}
E_*^{\prime} &= \{ \{\sigma(ij), \sigma(jk)\} \mid ij, jk\in V_*\}\\
&= \{ \{ji, kj\} \mid ij, jk\in V_*\}.
\end{align*}

Observe that $E_* = E_*^{\prime} = E_{tG}$ which implies $tG\in \mathcal{A}.$ It also implies if $H\in \mathcal{A}$ then $E_{tG}\subset E_H$ hence $\cap_{H\in\mathcal{A}} \,E_H = E_{tG}$.

\mpar
To see that $\pi$ is a homomorphism suppose $\{ij,kl\}\in E_{tG}$ and $j=k$. Then,
$$
\{\pi(ij), \pi(kl)\} = \{i, k\} = \{ i,j\}\in E_G.
$$
If, instead, $i=l$ then we have,
$$
\{\pi(ij), \pi(kl)\} = \{i, k\} = \{ l,k\}\in E_G.
$$
Thus, $\pi$ and hence $\pi_+ =\pi\circ\sigma$ are homomorphisms.

\mpar
Let $H\in\mathcal{B}$ so that $\pi\colon H\to G$ is a homomorphism. Then $\{ij, kl\}\in E_H$ implies $\{\pi(ij),\pi(kl)\}=\{i,k\}\in V_G.$ But then, by definition, $\{ij,kl\}\in E_{\tau G}$ and therefore $E_H\subset E_{\tau G}.$ On the other hand, $\tau G\in\mathcal{B}$ so $\bigcup_{H\in \mathcal{B}}E_H=E_{\tau G}.$

\mpar
Let $H\in\mathcal{C}$ and $s\in S_G$ be a vertex section. Since $s$ is a homomorphism we have $\{s(i), s(j)\}\in E_H$ whenever $\{i,j\}\in E_G.$ Now, $\pi(s(i))=i$ and $\pi(s(j))=j$ so $s(i)=il$ and $s(j)=jk$ for some vertices $l$ and $k$ such that $\{i, l\}, \{j, k\}\in E_G.$  Observe,
$$
\{ \{ il, jk\}\mid \{i,j\}\in E_G\} = E_{\tau G}
$$
which implies  $\tau G\in\mathcal{C}.$ It also implies if $H\in\mathcal{C}$ then $E_{\tau G}\subset E_H$ hence $\cap_{H\in\mathcal{C}}\, E_H = E_{\tau G}.$

\mpar
It follows from the definitions that $E_{tG}\subset E_{\tau G}$ and $\pi\colon\tau G\to G$ is a homomorphism. Simple examples like a path of length three show that neither $\sigma$ nor $\pi_+$ are homomorphisms, in general.

\mpar
If $G$ contains a triangle or any cycle of length three or more, then it contains incident edges $\{i,j\}, \{ j,k\},$ and $\{k,l\}$ such that $l\neq j$ and $i\neq k$. This implies $\{ji, kl\}\in E_{\tau G}$ but $\{ji, kl\}\notin E_{tG}$. Thus, if $tG=\tau G$ then $G$ must be a tree. But if $G$ has path of length three or more then the agument above shows $E_{tG}\neq E_{\tau G}.$ Therefore $G$ has paths of length at most two, meaning $G$ is a star graph, and in this case it's easy to see that $tG=\tau G.$
\end{proof}

\begin{remark}
1. In geometric terms, the adjacency relation of $tG$ admits of forward translations, backward translations, and reflections of directed edges of $G$. Specifically,  if $ij$ is in an edge of $tG$ then the edge represents either a forward translation $\{ij, jk\},$ a backward translation $\{ij, ki\},$ or a reflection $\{ij, ji\}.$ It is the smallest such graph on the directed edges of $G$ and admits of no other motions. 

\mpar
2. In contrast, the adjacency relation of $\tau G$ only requires that the base point of a directed edge move along an edge of $G$ and does not require the end point of a directed edge do so too, as is the case with $tG$. These extra degrees of freedom allow $\tau G$ to contain multiple embedded copies of $G$, one for each vertex section, and it is the smallest graph on the directed edges of $G$ with this property.

\mpar
3. It's helpful to reiterate the general notation for vertices and edges in tangent graphs. Recall that $V_{tG}=V_{\tau G}$. We have $\{u, v\}\in E_{tG}$ if and only if $\pi_+(u)=\pi(v)$ or $\pi(u)=\pi_+(v)$ and $\{u,v\}\in E_{\tau G}$ if and only if $\{\pi(u), \pi(v)\}\in E_G.$ Note that we often write $V_{tG}$ where, strictly speaking, we should write $V_{\tau G}$ because we want to emphasize the vertex sets are equal.

\mpar
4. We often make arguments involving two or more tangent graphs and functions defined on them. If confusion is likely to arise we attach appropriate subscripts or superscripts to $\sigma, \pi, \pi_+$ and $\deg$ for clarity's sake. But frequently, to reduce notational clutter, we refrain from labelling them when context makes usage clear. 
\end{remark}

It's helpful to recall a simple fact about how graph homomorphisms depend on the edge sets of the domain and range graphs.

\begin{lemma}
Let  $H, H^{\prime}$ and $K, K^{\prime}$ be graphs such that
$$
V_H=V_{H^{\prime}}, \quad V_K=V_{K^{\prime}}, \quad E_{H^{\prime}}\subset E_H, \quad\text{and}\quad E_K\subset E_{K^{\prime}}.
$$
If $\phi\colon H \to K$ is a graph homomorphism then so is $\phi\colon H^{\prime}\to K^{\prime}.$
\end{lemma}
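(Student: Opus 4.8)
The plan is to unwind the definition of graph homomorphism directly; no machinery is needed. Recall that $\phi\colon H\to K$ being a graph homomorphism means precisely that $\phi$ is a function $V_H\to V_K$ such that $\{i,j\}\in E_H$ implies $\{\phi(i),\phi(j)\}\in E_K$. Since $V_{H'}=V_H$ and $V_{K'}=V_K$, the very same set map $\phi$ serves as a candidate homomorphism $H'\to K'$, so the only thing to verify is the edge condition.

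First I would fix an arbitrary edge $\{i,j\}\in E_{H'}$. By the hypothesis $E_{H'}\subset E_H$ this is also an edge of $H$. Applying the assumption that $\phi\colon H\to K$ is a homomorphism gives $\{\phi(i),\phi(j)\}\in E_K$. Finally, invoking $E_K\subset E_{K'}$ yields $\{\phi(i),\phi(j)\}\in E_{K'}$. Since $\{i,j\}$ was arbitrary, the edge condition is satisfied and $\phi\colon H'\to K'$ is a graph homomorphism.

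There is essentially no obstacle: the statement is a formal consequence of the definitions, expressing the elementary principle that the homomorphism condition only becomes easier to meet when edges are deleted from the source or added to the target. The one point meriting a moment's care is that the vertex sets are genuinely \emph{equal} (not merely in bijection), so that the phrase ``the same map $\phi$'' is literally meaningful; the hypotheses $V_H=V_{H'}$ and $V_K=V_{K'}$ ensure this. In the sequel this lemma is applied to transfer homomorphism statements among $tG$, $\tau G$, and graphs intermediate between them, using $E_{tG}\subset E_{\tau G}$.
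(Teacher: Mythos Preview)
Your proof is correct and follows essentially the same approach as the paper: both observe that the homomorphism condition is an implication ``$\{i,j\}\in E_H \Rightarrow \{\phi(i),\phi(j)\}\in E_K$'' whose truth is preserved when edges are removed from the domain or added to the codomain. Your version is simply a more detailed unwinding of the same reasoning.
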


\begin{proof}
The condition for $\phi$ to be a homomorphism is the implication: if $\{i,j\}\in E_H$ then $\{\phi(i), \phi(j)\}\in E_K$. Thus, adding edges to $K$ or deleting edges from $H$ leaves the truth of the implication unchanged.
\end{proof}

The next result shows the assignments $G\to tG$ and $G\to \tau G$ are functorial.

\begin{proposition}
Let $h\colon H\to K$ be a graph homorphism and let the \textit{differential} $dh\colon V_{tH}\to V_{tK}$ be defined by the rule $dh(u)=h(\pi(u))h(\pi_+(u))$. Then,
$$
dh\colon tH\to tK,\quad dh\colon\tau H\to \tau K,\quad \text{and}\quad dh\colon tH\to \tau K
$$
are also graph homomorphisms.
\end{proposition}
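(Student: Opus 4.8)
The plan is to reduce all three claims to the two intertwining identities
$\pi\circ dh = h\circ\pi$ and $\pi_+\circ dh = h\circ\pi_+$ on $V_{tH}$, which are immediate from the defining rule $dh(u) = h(\pi(u))h(\pi_+(u))$ together with $\pi(ab)=a$ and $\pi_+(ab)=b$. Before invoking them I would first check that $dh$ is well defined as a map $V_{tH}\to V_{tK}=V_{\tau K}$: if $u=ij$ with $\{i,j\}\in E_H$, then $\{h(i),h(j)\}\in E_K$ because $h$ is a homomorphism, and since the edges of a simple graph are two-element sets this simultaneously records $h(i)\neq h(j)$, so $h(i)h(j)$ is a legitimate vertex of $tK$. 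I would also observe in passing that $\sigma\circ dh = dh\circ\sigma$, since $\sigma$ swaps the roles of $\pi$ and $\pi_+$; this is not needed for the three claims but reassures us $dh$ respects the extra structure on $tG$.

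For $dh\colon tH\to tK$, I would take $\{u,v\}\in E_{tH}$ and split into the two cases in the definition of $E_{tH}$. If $\pi_+(u)=\pi(v)$, then $\pi_+(dh(u)) = h(\pi_+(u)) = h(\pi(v)) = \pi(dh(v))$, so $\{dh(u),dh(v)\}$ satisfies the defining condition for $E_{tK}$; the case $\pi(u)=\pi_+(v)$ is handled symmetrically. The only point needing a sentence is that $\{dh(u),dh(v)\}$ is a genuine edge rather than a loop: writing $u=ij$, $v=jk$ in the first case, equality of the ordered pairs $h(i)h(j)$ and $h(j)h(k)$ would force $h(i)=h(j)$, contradicting $\{h(i),h(j)\}\in E_K$.

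For $dh\colon\tau H\to\tau K$, I would take $\{u,v\}\in E_{\tau H}$, i.e. $\{\pi(u),\pi(v)\}\in E_H$; applying the homomorphism $h$ and using $\pi\circ dh = h\circ\pi$ gives $\{\pi(dh(u)),\pi(dh(v))\} = \{h(\pi(u)),h(\pi(v))\}\in E_K$, which is exactly the condition placing $\{dh(u),dh(v)\}$ in $E_{\tau K}$, while the same fact about edges of $E_K$ rules out $dh(u)=dh(v)$. Finally $dh\colon tH\to\tau K$ requires no new argument: since $E_{tK}\subset E_{\tau K}$, the Lemma above (enlarging the target graph preserves homomorphisms) applied to $dh\colon tH\to tK$ yields $dh\colon tH\to\tau K$.

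I do not anticipate a genuine obstacle; the content is the bookkeeping of the two cases in the definition of $E_{tH}$ and the single observation that a homomorphism of simple graphs cannot collapse an edge to a loop, which is what keeps $dh$ from producing degenerate "edges." The one thing to stay disciplined about is using only $\pi$ — not $\pi_+$ or $\sigma$ — on the $\tau$ side, since by part 4 of the previous proposition those maps are not homomorphisms on $\tau G$, so the $\tau H\to\tau K$ argument must be routed through $\pi$ alone.
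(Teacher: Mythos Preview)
Your proposal is correct and follows essentially the same route as the paper: check well-definedness of $dh$ on vertices, handle $tH\to tK$ by the two-case split on $\pi_+(u)=\pi(v)$ versus $\pi(u)=\pi_+(v)$, handle $\tau H\to\tau K$ via $\pi\circ dh=h\circ\pi$, and derive $tH\to\tau K$ from Lemma~2.5. Your packaging of the argument through the intertwining identities $\pi\circ dh=h\circ\pi$ and $\pi_+\circ dh=h\circ\pi_+$ is slightly cleaner than the paper's direct expansions, and your explicit check that $dh$ cannot collapse an edge to a loop is a point the paper leaves implicit, but neither difference is substantive.
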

 
\begin{proof}
It's simple but worthwhile to check that $dh$ does map $V_{tH}$ to $V_{tK}.$ Since $h$ is a homomorphism, $\{i,j\}\in E_H$ implies $\{h(i), h(j)\}\in E_K,$ hence $ij\in V_{tH}$ implies $dh(ij) = h(i)h(j)\in V_{tK}$.

\mpar
First, we want to show that $\{u, v\}\in E_{tH}$ implies $\{dh(u), dh(v)\}\in E_{tK}.$  Assuming $\{u,v\}\in E_{tH},$ then either $\pi_+(u)=\pi(v)$ or $\pi(u)=\pi_+(v)$. In the former case, 
\begin{align*}
\pi_+(dh(u)) & =\pi_+(h(\pi(u))h(\pi_+(u)))=h(\pi_+(u))\,\,\text{and,}\\
\pi(dh(v)) & =\pi(h(\pi(v))h(\pi_+(v)))=h(\pi(v))=h(\pi_+(u)).
\end{align*}
It follows that $\pi_+(dh(u))=\pi(dh(v))$ and therefore $\{dh(u), dh(v)\}\in E_{tK}.$ In the latter case, 
\begin{align*}
\pi(dh(u)) & =\pi(h(\pi(u))h(\pi_+(u)))=h(\pi(u))\,\,\text{and,}\\
\pi_+(dh(v)) &=\pi_+(h(\pi(v))h(\pi_+(v)))=h(\pi_+(v))=h(\pi(u)).
\end{align*}
It follows that $\pi(d\phi(u))=\pi_+(d\phi(v))$ and therefore $\{dh(u), dh(v)\}\in E_{tK}.$ Thus, $dh\colon tH\to tK$ is a homomorphism.

\mpar
Next, we want to show that $\{u,v\}\in E_{\tau H}$ implies $\{dh(u), dh(v)\}\in E_{\tau K}.$ Assuming $\{u,v\}\in E_{\tau G}$ we have,
\begin{align*}
\pi(dh(u)) & =\pi(h(\pi(u))h(\pi_+(u))) = h(\pi(u))\,\,\text{and,}\\
\pi(dh(v)) & =\pi(h(\pi(v))h(\pi_+(v))) = h(\pi(v)).
\end{align*}
Since $h$ is a homomorphism, $\{\pi(dh(u)),\pi(dh(v))\} = \{h(\pi(u)), h(\pi(v))\}\in E_K$ and therefore $\{dh(u), dh(v)\}\in E_{\tau G}$. Thus, $dh\colon\tau H\to\tau K$ is a homomorphism.

\mpar
An appeal to Lemma 2.4 shows that $dh\colon tG\to\tau G$ is also a homomorphism.
\end{proof}

\begin{remark}
Let $t^2G=t(tG)$ denote the tangent graph of the tangent graph of $G,$ We frequently use the fact that $d\pi_+, d\pi\colon t^2G\to tG$ and $d\sigma\colon t^2G\to t^2G$ are graph homorphisms.
\end{remark}

It's helpful to work out some examples in detail. We'll see that $tK_{1,3}= K_{3,3}$ and the tangent graph of a triangle is a triangular wedge.

\begin{example}
Let $G=K_{1,3}$ be the complete bipartite graph having vertices $V_G=\{1,2,3,4\}$ and edges $E_G=\{\{1, 2\}, \{1, 3\}, \{1,4\}\}.$ (See Figure 1). Clearly, $tG$ has vertex set $V_{tG}= \{12, 21, 13, 31, 14, 41\}.$ According to the definition of adjacency in $tG$, there are no edges between elements of the set $A=\{12, 13, 14\}$ and no edges between elements of the set $B=\{21, 31, 41\},$ so $A$ and $B$ are independent sets in $tG.$ On the otherhand, according to the definition, every element of $A$ is adjacent to every element of $B$, hence $tG= K_{3,3}.$ Note that $\tau G = tG$ by Proposition 2.3.5.
\end{example}

\begin{figure} [h]
\begin{tikzpicture}
\draw[fill=black] (0,0) circle (2pt);
\draw[fill=black] (0,2) circle (2pt);
\draw[fill=black] (1.5,0) circle (2pt);
\draw[fill=black] (1.5,2) circle (2pt);
\draw[fill=black] (3,0) circle (2pt);
\draw[fill=black] (3,2) circle (2pt);

\node at (0, -0.3) {12};
\node at (0, 2.3) {21};
\node at (1.5, 2.3) {31};
\node at (1.5, -0.3) {13};
\node at (3, -0.3) {14};
\node at (3, 2.3) {41};

\draw[thick] (0,0) -- ((0,2) -- (1.5, 0) --(1.5, 2) -- (3,0) -- (3,2) -- (1.5, 0) -- (0,2) -- (3,0) -- (3,2) -- (0,0);
\draw[thick] (0,0) -- (1.5, 2);

\draw[fill=black] (5,0) circle (2pt);
\draw[fill=black] (6,.8) circle (2pt);
\draw[fill=black] (6,2) circle (2pt);
\draw[fill=black] (7,0) circle (2pt);

\node at (5, -0.3) {3};
\node at (6, 0.5) {1};
\node at (6, 2.3) {4};
\node at (7, -0.3) {2};

\draw[thick] (5, 0)-- (6,0.8) -- (6, 2) -- (6, 0.8) -- (7, 0);

\draw[fill=black] (9,0) circle (2pt);
\draw[fill=black] (12,0) circle (2pt);
\draw[fill=black] (10.5, 0) circle (2pt);
\draw[fill=black] (9,2) circle (2pt);
\draw[fill=black] (12,2) circle (2pt);
\draw[fill=black] (10.5,2) circle (2pt);

\node at (9,-0.3) {12};
\node at (12,-0.3) {14};
\node at (10.5,-0.3) {31};
\node at (9, 2.3) {21};
\node at (12,2.3) {41};
\node at (10.5,2.3) {13};

\draw[thick] (9,0) -- (10.5,0) -- (12,0);
\draw[thick] (9,2) -- (10.5,2) -- (12,2);
\draw[thick] (10.5, 0) -- (10.5, 2);
\draw[thick] (12, 0) -- (12, 2);
\draw[thick] (9,0) -- (9, 2);
\draw[thick] (9, 0) -- (12,2);
\draw[thick] (9,2) -- (12, 0);
\end{tikzpicture}
\caption{$K_{1,3}$ flanked by two representations of $K_{3,3}.$ The representation on the right should be compared with the triangular wedge in Figure 2. }
\end{figure}
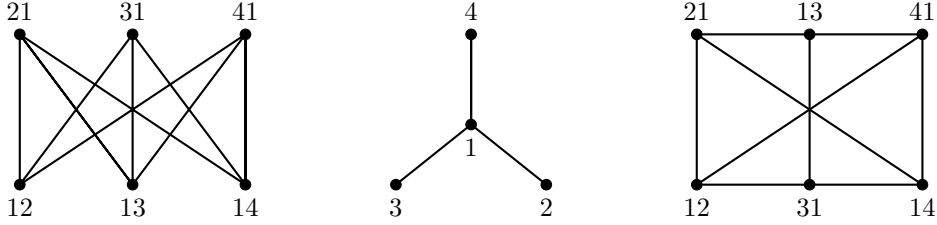

\begin{example}
Let's construct $tG$ and $\tau G$ when $G$ is a triangle with vertices $\{1, 2, 3\}.$ (See Figure 2). Evidently,
$$
V_{tG} = V_{\tau G} = \{ 12, 21, 23, 32, 31, 13\}.
$$

A moment's thought reveals $tG$ contains two triangles $\{ 12, 23, 31\}$ and $\{ 21, 13, 32\},$ corresponding to moving counterclockwise or clockwise around the triangle and three additional edges $\{ 12, 21\}, \{ 23, 32\},$  $\{ 31, 13\}$ corresponding to reflections. Since $tG$ is a subgraph of $\tau G$ all we need to do is add edges between vertices whose base points are adjacent in $G,$ namely $\{12,32\}, \{13, 23\}, \{21, 31\}.$ 

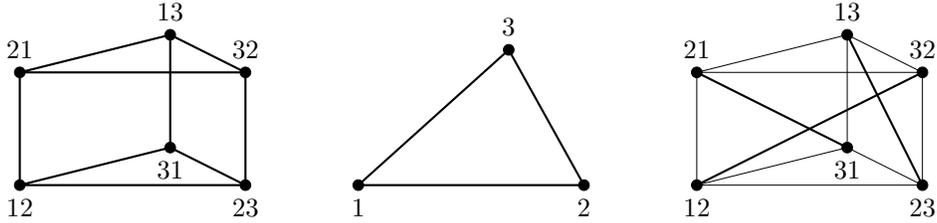
\begin{figure} [h]
\begin{tikzpicture}
\draw[fill=black] (4.5,0) circle (2pt);
\draw[fill=black] (7.5,0) circle (2pt);
\draw[fill=black] (6.5,1.8) circle (2pt);

\node at (4.5,-0.30) {1};
\node at (7.5,-0.3) {2};
\node at (6.5,2.1) {3};

\draw[thick] (4.5,0) -- (7.5,0) -- (6.5,1.8) -- (4.5,0);

\draw[fill=black] (0,0) circle (2pt);
\draw[fill=black] (3,0) circle (2pt);
\draw[fill=black] (2,0.5) circle (2pt);
\draw[fill=black] (0,1.5) circle (2pt);
\draw[fill=black] (3,1.5) circle (2pt);
\draw[fill=black] (2,2) circle (2pt);

\node at (0,-0.3) {12};
\node at (3,-0.3) {23};
\node at (2,0.2) {31};
\node at (0,1.8) {21};
\node at (3,1.8) {32};
\node at (2,2.3) {13};

\draw[thick] (0,0) -- (3,0) -- (2,0.5) -- (0,0) -- (0, 1.5) -- (3, 1.5) --  (2,2) -- (0, 1.5);
\draw[thick] (2, 0.5) -- (2, 2);
\draw[thick] (3, 0) -- (3, 1.5);

\draw[fill=black] (9,0) circle (2pt);
\draw[fill=black] (12,0) circle (2pt);
\draw[fill=black] (11,0.5) circle (2pt);
\draw[fill=black] (9,1.5) circle (2pt);
\draw[fill=black] (12,1.5) circle (2pt);
\draw[fill=black] (11,2) circle (2pt);

\node at (9,-0.3) {12};
\node at (12,-0.3) {23};
\node at (11,0.2) {31};
\node at (9,1.8) {21};
\node at (12,1.8) {32};
\node at (11,2.3) {13};

\draw[very thin] (9,0) -- (12,0) -- (11,0.5) -- (9,0) -- (9, 1.5) -- (12, 1.5) --  (11,2) -- (9, 1.5);
\draw[very thin] (11, 0.5) -- (11, 2);
\draw[very thin] (12, 0) -- (12, 1.5);
\draw[thick] (9,1.5) -- (11, 0.5);
\draw[thick] (11, 2) -- (12,0);
\draw[thick] (9,0) -- (12, 1.5);
\end{tikzpicture}
\caption{From left to right the graphs are $tG, G,$ and $\tau G.$ The edges of $\tau G$ that are in $tG$ are in grey while the edges of $\tau G$ not in $tG$ are in black.}
\end{figure}
\end{example}

Next, we derive formulas for the number of vertices and edges of $tG$ and $\tau G$. They play a role in subsequent calculations and they are interesting in their own right, as they provide checks on the correctness of examples.

\begin{proposition}
Let $G=(V_G, E_G)$ be a finite simple graph with adjacency matrix $A$ and adjacency operator $A\phi(i) = \sum_{j\in V_G} A(i,j)\phi(j)$. Then,
\begin{align*}
|V_{tG}| = |V_{\tau G}| & = 2|E_G|\\
\deg_{tG}(u) & = \deg(\pi_+(u)) + \deg(\pi(u)) -1\\
\deg_{\tau G}(u) & = A\deg(\pi(u))\\
|E_{tG}|+|E_G| & = \sum_{i\in V_G} \deg(i)^2\\
|E_{\tau G}| & = \tfrac{1}{2}\sum_{i\in V_G} \deg(i)\, A\deg(i)
\end{align*}

\end{proposition}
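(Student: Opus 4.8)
The plan is to treat the two degree formulas as the substance of the proposition; the vertex count is immediate, and the two edge counts then drop out of the handshake lemma. Since $V_{tG} = V_{\tau G} = V_*$ and each edge $\{i,j\}\in E_G$ contributes precisely the two distinct directed edges $ij, ji$ to $V_*$ (with distinct edges contributing disjoint pairs), one reads off $|V_{tG}| = |V_{\tau G}| = 2|E_G|$ at once.

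\mpar

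For the $tG$ degree I would fix $u = ij$ and describe its neighbourhood in $tG$ as the union of the two families $\{v : \pi(v) = \pi_+(u)\} = \{jk : \{j,k\}\in E_G\}$ and $\{v : \pi_+(v) = \pi(u)\} = \{ki : \{i,k\}\in E_G\}$, of sizes $\deg(\pi_+(u))$ and $\deg(\pi(u))$. Inclusion--exclusion then yields the formula once one checks that these families meet in exactly the one vertex $\sigma(u) = ji$ and that neither contains $u$ itself. For the $\tau G$ degree I would argue that $v$ is adjacent to $u = ij$ in $\tau G$ exactly when $\pi(v)$ is a neighbour of $i$ in $G$, and that for each neighbour $k$ of $i$ there are $\deg(k)$ vertices $v$ with $\pi(v) = k$; summing over $k$ produces $\sum_k A(i,k)\deg(k) = A\deg(\pi(u))$, with $u$ itself automatically excluded since $i$ is not adjacent to $i$.

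\mpar

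For the edge counts I would invoke $2|E_{tG}| = \sum_{u\in V_*}\deg_{tG}(u)$ and $2|E_{\tau G}| = \sum_{u\in V_*}\deg_{\tau G}(u)$ and reindex using the single bookkeeping fact that, as $u$ runs over $V_*$, the base point $\pi(u)$ (respectively the end point $\pi_+(u)$) equals a given vertex $i$ for exactly $\deg(i)$ values of $u$ — namely the directed edges leaving (respectively entering) $i$. This turns both $\sum_u \deg(\pi(u))$ and $\sum_u \deg(\pi_+(u))$ into $\sum_i \deg(i)^2$, and turns $\sum_u A\deg(\pi(u))$ into $\sum_i \deg(i)\,A\deg(i)$; combined with $\sum_u 1 = |V_*| = 2|E_G|$ this gives $|E_{tG}| + |E_G| = \sum_i \deg(i)^2$ and $|E_{\tau G}| = \tfrac12\sum_i \deg(i)\,A\deg(i)$.

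\mpar

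The only step needing genuine care is the inclusion--exclusion in the $tG$ degree formula: verifying that the two neighbour families overlap in a single vertex and that neither contains $u$. Both facts rely only on $G$ being loopless, so this is mild; everything else is routine counting, and I would expect the write-up to be short.
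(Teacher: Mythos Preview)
Your proposal is correct and mirrors the paper's proof almost exactly: the same two-family decomposition $N_\pm$ and inclusion--exclusion for $\deg_{tG}$, the same fibre count over neighbours of $\pi(u)$ for $\deg_{\tau G}$, and the same handshake-plus-reindexing for both edge totals. The only cosmetic difference is that the paper writes the reindexing as $\sum_{u\in V_{tG}} = \sum_{i}\sum_{j} A(i,j)$ rather than invoking your ``$\pi(u)=i$ for exactly $\deg(i)$ values of $u$'' principle, but these are the same observation.
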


\begin{proof}
The first item is obvious since each edge of $G$ determines two directed edges in $V_{tG}$. Let $u=ij$ be a vertex and let's calculate the degree of $u$ in the respective graphs. Now, $\{u,v\}\in E_{tG}$ if and only if $\pi(v)=j$ or $\pi_+(v)=i$. Hence the set of vertices in $V_{tG}$ adjacent to $u$ is $N_-\cup N_+$ where
$$
N_-=\{ v\in V_{tG}\mid \pi_+(v)=i\},\,\,\text{and}\,\, N_+=\{v\in V_{tG}\mid \pi(v)=j\}.
$$

Clearly, $|N_+| = \deg(j)$,  $|N_-| = \deg(i)$ and $|N_-\cap N_+| =1$ since only the edge $\{ij, ji\}$ lies in both sets, hence $\deg_{tG}(ij) = \deg(i) + \deg(j) -1.$ We have,
\begin{align*}
2|E_{tG}| & = \sum_{u\in V_{tG}}\deg_{tG}(u)\\
& = \sum_{i\in V_G}\sum_{j\in V_G}A(i,j)\deg_{tG}(ij)\\
& = \sum_{i\in V_G}\sum_{j\in V_G}A(i,j)\left[\deg(i)+\deg(j) -1\right]\\
& = 2\sum_{i\in V_G}\deg(i)^2 -\sum_{i\in V_G}\deg(i),
\end{align*}
which is the desired formula, upon rearrangement of terms.

\mpar
Next, recall that $\{u,v\}\in E_{\tau G}$ if only if $\{\pi(u), \pi(v)\}\in E_G$ hence, 
$$
\deg_{\tau G}(u) = \sum_{j\in V_G} A(\pi(u), j)\deg(j) = A\deg(\pi(u)).
$$
It follows that,
$$
2|E_{\tau G}| = \sum_{u\in V_{tG}} \deg_{\tau G}(u) = \sum_{u\in V_{tG}}A\deg(\pi(u)) = \sum_{i\in V_G}\deg(i)\, A\deg(i),
$$
which concludes the proof.
\end{proof}

\begin{remark}
These formulas say that the tangent graph of a triangle has 6 vertices and 9 edges while the complete tangent graph of a triangle has 6 vertices and 12 edges which, in fact, they do.
\end{remark}

We close this section with the observation that $tG$ has the structure of a discrete cubical complex, denoted $t_xG$. This structure clarifies the relationship between the tangent graph $tG$ and the line graph $lG$. We'll see that $tG$ determines $lG$ but not vice versa.

\begin{proposition}
Suppose no connected component of $G$ consists of a single edge or isolated vertex. 

\mpar
1. $tG$ is the union of 4-cycle subgraphs. Specifically, for every pair $u, v\in V_{tG}$ such that $\pi(u)=\pi(v)$ let $C(u, v)$ be the graph with  vertex set $\{ u,\overline{u}, v, \overline{v}\}$ and edge set $\{ \{u, \overline{u}\}, \{ v, \overline{v}\}, \{\overline{u}, v\}, \{u, \overline{v}\} \}.$ Let,

$$
F_{tG} = \{ C(u,v)\mid u, v\in V_{tG},\, u\neq v,\, \pi(u)=\pi(v)\}.
$$

Then,
$$
tG = \bigcup_{C\in F_{tG}} C.
$$

2. Let $t_xG$ be the cubical complex having faces,
$$
f_0(t_xG) = V_{tG},\quad f_1(t_xG) = E_{tG},\quad f_2(t_xG) = F_{tG},
$$
along with $\mathbb{Z}^2$-coefficient chains $C_i =C_i(t_xG),\, 1\leq i\leq 3,$ and differentials,
$$
\partial u =0,\quad \partial \{u, v\} = u+v,\quad \partial C(u, v) = \{u, \overline{u}\} +\{ v, \overline{v}\} + \{\overline{u}, v\}+ \{u, \overline{v}\}.
$$

Then,
$$
0\leftarrow C_0\xleftarrow{\partial} C_1 \xleftarrow{\partial} C_2
$$
is an exact sequence.

\mpar
3. Let $\lambda G$ be the graph with vertex set,
$$
V_{\lambda G} =\{\{u,\overline{u}\}\in E_{tG}\mid u\in V_{tG}\},
$$
and edge set, 
$$
E_{\lambda G} = \{ \{ \{u,\overline{u}\}, \{v,\overline{v}\}\}\mid C(u,v)\in F_{tG}\}.
$$
Then $\lambda G$ is isomorphic to the line graph $lG.$ In particular, $lG$ is a minor of $tG.$
\end{proposition}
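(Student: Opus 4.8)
\emph{Part 1.} Here I would first check that each $C(u,v)$ with $\pi(u)=\pi(v)$ and $u\neq v$ is a genuine $4$-cycle subgraph of $tG$, and then that every vertex and every edge of $tG$ lies in one of them; parts 2 and 3 build on this structure, so I treat them afterward. Write $u=ji$ and $v=jk$; then $i\neq k$ since $u\neq v$, and the four directed edges $u=ji$, $\overline u=ij$, $v=jk$, $\overline v=kj$ are pairwise distinct because $G$ has no loops and $i\neq k$. The edges $\{u,\overline u\}$ and $\{v,\overline v\}$ always lie in $E_{tG}$ (for any $w$, $\pi_+(w)=\pi(\overline w)$), while $\{\overline u,v\}$ and $\{u,\overline v\}$ lie in $E_{tG}$ because $\pi_+(\overline u)=\pi(u)=\pi(v)$ and $\pi_+(\overline v)=\pi(v)=\pi(u)$; these four edges form the cycle $u-\overline u-v-\overline v-u$. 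For the second point, a vertex $ij$ lies in $C(ij,ik)$ as soon as $i$ has a neighbour $k\neq j$; if instead $\deg(i)=1$, the hypothesis that the component of $\{i,j\}$ is not a single edge forces $\deg(j)\ge2$, and $ij=\overline{ji}$ lies in $C(ji,jk)$ for a neighbour $k\neq i$ of $j$. Every edge of $tG$ is either a reflection edge $\{ij,ji\}$, lying in the square just produced for the vertex $ij$, or a ``cherry'' edge $\{ij,jk\}$ with $k\neq i$, which is the edge $\{\overline u,v\}$ of $C(ji,jk)\in F_{tG}$. As $tG$ has no isolated vertices ($\deg_{tG}(u)=\deg(\pi(u))+\deg(\pi_+(u))-1\ge1$), covering the edges covers the vertices, and $tG=\bigcup_{C\in F_{tG}}C$ follows. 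Note that the hypothesis is used precisely to exclude the one bad case: an isolated edge of $G$ gives a reflection edge of $tG$ lying in no $4$-cycle. I expect no obstacle here.

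\emph{Part 2.} Work over $\mathbb{Z}/2$. First, $(C_\bullet,\partial)$ is a chain complex, since $\partial\partial C(u,v)=(u+\overline u)+(v+\overline v)+(\overline u+v)+(u+\overline v)=0$. The substance is exactness at $C_1$: the $\mathbb{Z}/2$ cycle space of $tG$ is spanned by the square boundaries $\partial C(u,v)$. One inclusion is immediate, each $\partial C(u,v)$ being a $1$-cycle. For the reverse I would argue by reduction: given a $1$-cycle $z$, use the squares to cancel its reflection edges one at a time — part 1 guarantees each reflection edge lies in some $C(u,v)$, so adding $\partial C(u,v)$ removes it — ending with a $1$-cycle $z'$ supported only on cherry edges; then show that any such $z'$ is itself a $\mathbb{Z}/2$-combination of square boundaries, which I would do via a spanning-tree / fundamental-cycle count in $tG$, or by an induction on $G$ (adjoining one vertex or edge at a time and tracking how $tG$, $F_{tG}$, and the cycle space change). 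This residual-cycle step carries the weight of the proposition and I expect it to be the main obstacle. (Exactness at the bottom, $\partial_1$ surjective, reduces after replacing $C_0$ by reduced $0$-chains to connectedness of $tG$, which follows from part 1.)

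\emph{Part 3.} This is a translation of definitions. The reflection edges of $tG$ are in canonical bijection with the undirected edges of $G$ via $\{ij,ji\}\leftrightarrow\{i,j\}$, giving a bijection $\iota\colon V_{\lambda G}\to V_{lG}=E_G$. By part 1, a square $C(u,v)\in F_{tG}$ with $u=ji$, $v=jk$ has exactly the two reflection edges $\{ji,ij\}$ and $\{jk,kj\}$, so $\iota$ sends the corresponding edge of $\lambda G$ to the pair $\{\{i,j\},\{j,k\}\}$ of edges of $G$ meeting at $j$ — an edge of $lG$; conversely every edge $\{\{i,j\},\{j,k\}\}$ of $lG$ (necessarily $i\neq k$) arises from $C(ji,jk)$, and distinct squares yield distinct edges since $\pi(u)=\pi(v)$ with $u\neq v$ forces $\pi_+(u)\neq\pi_+(v)$, so a square is recovered from its unordered pair of reflection edges. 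Hence $\iota\colon\lambda G\to lG$ is a graph isomorphism. Finally $\lambda G$ is exactly what $tG$ becomes after contracting every reflection edge and discarding the resulting multi-edges, so $lG\cong\lambda G$ is a minor of $tG$. No step here is hard.
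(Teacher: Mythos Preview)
Your Parts 1 and 3 are essentially the paper's arguments: the same case split (vertex $ij$ lies in $C(ji,jk)$ or $C(ij,il)$; edge $\{ij,kl\}$ lies in $C(ji,jl)$ or $C(ij,il)$), and the same bijection $\{ij,ji\}\leftrightarrow\{i,j\}$ followed by contraction of the reflection edges to exhibit $lG$ as a minor.

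For Part 2 there is a genuine mismatch, but it originates in the statement rather than in your strategy. The paper's entire proof of item 2 is the one-line computation $\partial^2 C(u,v)=2u+2\overline u+2v+2\overline v=0$ over $\mathbb{Z}_2$; it checks only that $(C_\bullet,\partial)$ is a chain complex. You have read ``exact sequence'' literally and set out to show that the $\mathbb{Z}_2$-cycle space of $tG$ is generated by the boundaries $\partial C(u,v)$. That stronger claim is false. Take $G=C_3$: the triangular wedge $tG$ has $6$ vertices and $9$ edges, hence a $4$-dimensional cycle space, while $|F_{tG}|=\sum_{i}\binom{\deg i}{2}=3$, so $\operatorname{im}\partial_2$ has dimension at most $3$. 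Concretely, the oriented triangle $\{12,23\}+\{23,31\}+\{31,12\}$ is a $1$-cycle not in $\operatorname{im}\partial_2$ (the sum of all three squares equals the sum of the \emph{two} oriented triangles, but neither triangle individually lies in the span). So the ``residual-cycle step'' you correctly flagged as the main obstacle is in fact insurmountable; your spanning-tree or inductive arguments would break on this example. Read the paper's intended content as $\partial\circ\partial=0$; with that reading, the first sentence of your Part 2 already does everything the paper does.
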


\begin{proof}
Let $H=\cup_{C\in F_{tG}} C.$ Then $H$ is a subgraph of $tG$ so we just need to show it's a spanning graph that contains every edge of $tG.$ So, suppose $u=ij\in V_{tG}.$ Since $G$ has no isolated edges there is an edge incident to $\{i,j\},$ say $\{j,k\}$ where $k\neq i.$ Then $u$ is a vertex of $C(ji, jk) \subset H.$ Similarly, if the adjacent edge is $\{i,l\}$ with $l\neq j$ then $u$ is a vertex of $C(ij, il)\subset H,$ and therefore $H$ is a spanning subgraph of $tG.$ Now suppose $\{u,v\}\in E_{tG}$ where $v=kl.$ If $k=j$ then $\{u,v\}$ is an edge of $C(ji,jl)\subset H$ and if $l=i$ then $\{u,v\}$ is an edge of $C(ij, il),$ and therefore $H=tG.$

\mpar
Item 2 is a straightforward verification that $\partial\circ\partial =0$ for chains with $\mathbb{Z}_2$ coefficients, namely,
$$
\partial^2 C(u,v) = 2u + 2\overline{u} + 2v +2\overline{v} = 0.
$$

\mpar
Clearly the correspondence of $\{i,j\}\in E_G$ with $\{ij, ji\}\in E_{tG}$ is a bijection between $V_{lG}$ and $V_{\lambda G}.$ Similarly, edges in $E_{lG}$ are pairs of incident edges in $G$ which are in one-to-one correspondence with pairs of vertices $u, v\in V_{tG}$ such that $\pi(u)=\pi(v)$.  Thus, the condition $C(u,v)\in F_{tG}$ is equivalent to the incidence of edges $\{\pi(u), \pi_+(u)\}$ and $\{\pi(v), \pi_+(v)\}$ which establishes a bijection between $E_{lG}$ and $E_{\lambda G}.$ To see that $lG$ is a minor of $tG$, just observe that the edge $\{\{i,j\}, \{j,k\}\}\in lG$ is isomorphic to the minor of the rectangle $C(ji,jk)$ obtained by contracting the edges $\{ij,ji\}$ and $\{jk,kj\}.$
\end{proof}

\begin{remark}
1. Here's a nice way to visualize $lG$ lying inside $t_xG.$ Imagine each face in $F_{tG}$ is a solid square and $t_xG$ is obtained by gluing these squares along edges of the form $\{u,\overline{u}\}, u\in V_{tG}.$ By item 3, the graph obtained by joining the midpoints of these edges by lines bisecting the squares is isomorphic to the line graph of $G$. 

\mpar
2. $tG$ is a strictly finer invariant than $lG$ in the following sense: $tG$ determines $lG$ but there exist graphs $H$ and $K$ such that $lH\cong lK$ but $tH$ and $tK$ are not isomorphic. In fact $H=C_3$ and $K=K_{1,3}$ are such a pair. It's easy to see that $lC_3 = lK_{1,3}=C_3.$ However, $tC_3$ and $tK_{1,3}$, which were calculated in Examples 2.8 and 2.9, are not isomorphic because the triangular wedge $tC_3$ has genus 0 while $tK_{1,3}=K_{3,3}$ has genus 1. 
\end{remark}

\section{Vector Calculus}
We begin this section by collecting together in one place a number of basic definitions, observations, and notations that are used frequently in the sequel. Generally speaking, these are familiar things expressed in an unfamiliar setting.

\begin{definition}
1. Let $C(G)$ denote the set of \textit{real-valued functions on the graph} $G$. (Strictly speaking these are functions $\phi\colon V_G\to \mathbb{R}$ defined on the vertex set of $G$ so the notation $C(V_G)$ would more accurate, but our notation reflects that of the classical case, which is an important touchstone). Note that in this usage, $C(tG) = C(\tau G)$ since $V_{tG}=V_{\tau G}$. Observe that $C(G)$ has a canonical basis consisting of indicator functions,
$$
e_i(j) = 
\begin{cases}
1, & \text{if $j=i,$}\\
0, & \text{if $i\neq j.$} 
\end{cases}
$$
\mpar
In this notation we have $C(G)=\langle e_i\mid i\in V_G\rangle,$ where the angle brackets mean the vector space defined by the indicated elements.
\mpar
2. The \textit{tangent space} of $G$ at $i\in V_G$ is the vector space,
$$
T_i(G) = \langle e_u \mid u\in V_{tG},\, \pi(u)=i\rangle.
$$
These tangent spaces have a distinguished orthonormal inner product: for all $u, w\in V_{tG}$ such that $\pi(u)=\pi(w) =i,$ we have,
$$
\langle e_u, e_w\rangle = 
\begin{cases}
1, & \text{if $u=v,$}\\
0, & \text{if $u\neq v.$} 
\end{cases}
$$
(While the dual use of angle bracket notation is ambiguous, it is unlikely to cause confusion).

\mpar
3. The \textit{tangent bundle} of $G$ is the coproduct of tangent spaces,
$$
T(G)=\coprod_{i\in V_{tG}} T_i(G).
$$
Elements of the tangent bundle inherit an adjacency relation from $G$, namely $X_i\in T_i(G)$ and $X_j\in T_j(G)$ are adjacent in $T(G)$ if and only if $\{i,j\}\in E_G.$

\mpar
4. A \textit{vector field} on $G$ is a section $X\colon G\to T(G),$ meaning $X_i\in T_i(G)$ for all $i\in V_G$. Every vector field can be written in several ways as a sum, for example,
$$
X=\sum_{i\in V_{tG}} X_i =\sum_{i\in V_G}\sum_{\pi(u)=i} X(u)e_u=\sum_{u\in V_{tG}} X(u)e_u.
$$
The real numbers $X(u)$ are called the \textit{coefficients} of $X$. We have deliberately conflated $X_i,$ thought of as a vector in a tangent space at vertex $i,$ with $X_i$ thought of as a vector field on $G.$ We usually use a subscript only when we want to emphasize $X_i\in T_i(G).$ 

\mpar
5. The \textit{space of vector fields} on $G$ is denoted $\mathcal{X}(G).$ Note that,
$$
C(tG)=\langle e_u\mid u\in V_{tG}\rangle \cong \bigoplus_{i\in V_G}\langle e_u\mid u\in V_{tG}, \pi(u)=i\rangle = \bigoplus_{i\in V_G} T_i(G) = \mathcal{X}(G).
$$
$T_i(G)$ and $\mathcal{X}(G)$ inherit an inner product from $C(tG).$ Typically, we use lower case letters like $f, g,$ and $h$ to denote functions in $C(tG)$ and upper case letters like $X, Y,$ and $Z$ to denote vector fields in $\mathcal{X}(G).$ On the other hand, we always think of the coefficients $X(u)$ of a vector field $X$ as defining a function in $C(tG).$

\mpar
6. For every $u\in V_{tG}$ let $\overline{u}=\sigma(u),$ so that $\overline{ij}=ji.$ For every $X\in\mathcal{X}(G)$ denote by $\overline{X}$ the vector field with coefficients $\overline{X}(u) = X(\overline{u}).$

\mpar
7. Let $\phi\in C(G)$ and $X,Y\in\mathcal{X}(G)$. We define new vector fields $\phi\negthinspace\cdot\negthickspace X$ and $X\negthickspace:\negthinspace Y$ on $G$ by the coefficient formulas,
$$
(\phi\negthinspace\cdot\negthickspace X)(u) = \phi(\pi(u)) X(u), \quad (X\negthickspace :\negthinspace Y)(u) = X(u)Y(u),
$$ 
which turn $\mathcal{X}(G)$ into a commutative algebra over $C(G).$

\end{definition}

\begin{proposition}
1. Let $d\colon C(G)\to C(tG)$ and $\pi,\pi_+\colon C(tG)\to C(G)$ be defined by the rules,
\begin{align*}
d\phi(u) & =\phi(\pi_+(u))-\phi(\pi(u)),\\
\pi f(i) & =\sum_{\pi(u)=i} f(u),\\
\pi_+f(i) & = \sum_{\pi_+(u)=i}f(u) = \sum_{\pi(u)=i} f(\overline{u}).
\end{align*}
Then $d$ and $(\pi_+-\pi)$ are mutually adjoint in the sense that,
$$
\langle d\phi, f\rangle_{C(tG)} = \langle \phi, (\pi_+ - \pi)f\rangle_{C(G)}.
$$

2. Let $\nabla\colon C(G)\to\mathcal{X}(G)$ and $\Div\colon \mathcal{X}(G)\to G$ be defined by the rules,
$$
\nabla\phi = \sum_{u\in V_{tG}}d\phi(u) e_u
$$
and
$$
 \Div X = \sum_{i\in V_G}\left(\pi_+ X(i) - \pi X(i)\right) e_i.
$$
Then, $\nabla$ and $\Div$ are mutually adjoint in the sense that,
$$
\langle\nabla\phi, X\rangle_{\mathcal{X}(G)}=\langle\phi, \Div X\rangle_{C(G)}.
$$

3. Let $\Delta\colon C(G) \to C(G)$ be the Laplace operator $\Delta = \Div\circ\nabla.$ Then,
$$
\Delta\phi(i) = -2\negthickspace\sum_{\pi(u)=i} d\phi(u).
$$
A function $\phi$ is harmonic, that is $\Delta\phi = 0,$ if and only if $\phi$ is constant on each connected component of $G$.
\end{proposition}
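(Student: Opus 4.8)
The plan is to establish the three items in order. Item~1 is a direct computation: expanding in the orthonormal basis $\{e_u\}$ of $C(tG)$,
$$
\langle d\phi, f\rangle_{C(tG)} = \sum_{u\in V_{tG}} \bigl(\phi(\pi_+(u))-\phi(\pi(u))\bigr)\,f(u) = \sum_{u\in V_{tG}} \phi(\pi_+(u))\,f(u) - \sum_{u\in V_{tG}} \phi(\pi(u))\,f(u),
$$
and grouping the terms of the first sum by the value $i=\pi_+(u)$ and those of the second by $i=\pi(u)$ turns this, by the definitions of $\pi_+$ and $\pi$ as fibrewise sums, into $\sum_{i}\phi(i)\,\pi_+f(i)-\sum_i\phi(i)\,\pi f(i)=\langle\phi,(\pi_+-\pi)f\rangle_{C(G)}$. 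Item~2 then needs no new work: under the canonical identification $C(tG)\cong\mathcal{X}(G)$ of Definition~3.1, the operator $\nabla$ \emph{is} $d$ (both send $\phi$ to the coefficient function $u\mapsto d\phi(u)$) and $\Div$ \emph{is} $\pi_+-\pi$ acting on the coefficient function of $X$, so $\langle\nabla\phi,X\rangle_{\mathcal{X}(G)}=\langle d\phi,X\rangle_{C(tG)}=\langle\phi,(\pi_+-\pi)X\rangle_{C(G)}=\langle\phi,\Div X\rangle_{C(G)}$ by item~1.

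For item~3 I would compute $\Delta\phi=\Div\nabla\phi$ directly. Since $\nabla\phi$ has coefficient function $d\phi$, the divergence formula gives $\Delta\phi(i)=\pi_+(d\phi)(i)-\pi(d\phi)(i)=\sum_{\pi_+(u)=i}d\phi(u)-\sum_{\pi(u)=i}d\phi(u)$. The one point requiring care is that $\sum_{\pi_+(u)=i}d\phi(u)=-\sum_{\pi(u)=i}d\phi(u)$: because $\pi_+=\pi\circ\sigma$, the involution $\sigma$ restricts to a bijection of $\{u\mid\pi(u)=i\}$ onto $\{u\mid\pi_+(u)=i\}$, and $d\phi(\overline u)=\phi(\pi(u))-\phi(\pi_+(u))=-d\phi(u)$ straight from the definition of $d$; reindexing the first sum by $u\mapsto\overline u$ produces the sign. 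Adding the two sums then gives $\Delta\phi(i)=-2\sum_{\pi(u)=i}d\phi(u)$.

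For the harmonic characterization, the easy direction is immediate: if $\phi$ is constant on each connected component of $G$ then $\phi(\pi_+(u))=\phi(\pi(u))$ for every $u\in V_{tG}$, so $d\phi\equiv0$ and hence $\Delta\phi=0$ by item~3. Conversely, if $\Delta\phi=0$, pairing with $\phi$ and applying item~2 with $X=\nabla\phi$ gives
$$
\|\nabla\phi\|_{\mathcal{X}(G)}^2=\langle\nabla\phi,\nabla\phi\rangle_{\mathcal{X}(G)}=\langle\phi,\Div\nabla\phi\rangle_{C(G)}=\langle\phi,\Delta\phi\rangle_{C(G)}=0,
$$
so $\nabla\phi=0$, i.e. $d\phi\equiv0$; this forces $\phi$ to take equal values at the two endpoints of every edge of $G$, and walking along paths shows $\phi$ is constant on each connected component. (Alternatively one could invoke the maximum principle, since $\Delta\phi=0$ exhibits $\phi(i)$ as the average of the values of $\phi$ over the neighbours of $i$ at every non-isolated vertex.) None of this is genuinely hard — the proposition is essentially bookkeeping — and the only spots worth attention are the reindexing via $\sigma$ in item~3 and the fact that item~2 is a corollary of item~1 rather than a separate calculation.
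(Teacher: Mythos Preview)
Your proof is correct and follows essentially the same route as the paper: item~1 is the same direct expansion and regrouping by fibre, item~2 is dismissed (as in the paper) as a restatement of item~1 under the identification $C(tG)\cong\mathcal{X}(G)$, and item~3 uses the antisymmetry $d\phi(\overline{u})=-d\phi(u)$ followed by the adjoint pairing $0=\langle\phi,\Delta\phi\rangle=\langle\nabla\phi,\nabla\phi\rangle$ for the harmonic characterization. You supply slightly more detail than the paper (the explicit reindexing via $\sigma$, the easy direction of the harmonic equivalence, and the maximum-principle aside), but the argument is the same.
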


\begin{proof}
For item $1$ we have,
\begin{align*}
\langle d\phi, f\rangle_{C(tG)} & = \sum_{u\in V_{tG}}d\phi(u) f(u)\\
& = \sum_{u\in V_{tG}}\negthickspace\phi(\pi_+(u)) f(u) - \sum_{u\in V_{tG}} \negthickspace\phi(\pi(u)) f(u)\\
& = \sum_{i\in V_G}\phi(i)\negthickspace\sum_{\pi_+(v)=i} \negthickspace f(v) - \negthickspace\sum_{i\in V_G} \phi(i)\negthickspace\sum_{\pi(u) =i} \negthickspace f(u)\\
& = \sum_{i\in V_G} \phi(i)\negthickspace\sum_{\pi(u) = i} \negthickspace\left( f(\overline{u}) - f(u)\right)\\
& = \langle \phi, \left(\pi_+ - \pi\right) f\rangle_{C(G)}.
\end{align*}

\mpar
Item $2$ is just a restatement if item $1$ in the language of vector fields, keeping in mind the isomorphism between $C(tG)$ and $\mathcal{X}(G).$
\mpar
To demonstrate item $3$ note that $d\phi(\overline{u}) = -d\phi(u)$ hence,
\begin{align*}
\Delta\phi(i) & = \Div\nabla\phi(i)\\
& = \sum_{u\in V{tG}} \left[d\phi(\overline{u}) - d\phi(u)\right]\\ & = -2\sum_{u\in V_{tG}}d\phi(u).
\end{align*} \
If $\Delta\phi = 0$ then,
$$
0 = \langle\Delta\phi, \phi\rangle = \langle\nabla\phi, \nabla\phi\rangle =\sum_{u\in V_{tG}} |d\phi(u)|^2\negthickspace,
$$
hence $\phi(i) = \phi(j)$ whenever $\{i,j\}\in E_G$. That is, $\phi$ must be constant on each connected compnent of $G$.
\end{proof}

As in the classical case, vector fields can be identified with first order differential operators on functions. The proposition below provides relevant details in the discrete case.

\begin{definition}
Let $G$ be a finite simple graph  and let $d(i,j)$ be the length of the shortest walk between $i$ and $j$. Let $L\colon C(G)\to C(G)$ be a linear transformation so that $L$ has the form,
$$
L\phi(i) = \sum_{j\in V_G}L(i,j)\phi(j).
$$

We say $L$ is a \textit{$k$-th order differential operator} provided, (i) $L(i,j)=0$ whenever $d(i,j)> k$ and (ii) $L\phi = 0$ for every function that is constant on the connected components of $G$. The space of $k$-th order differential operators is denoted $DOp^k(G).$
\end{definition}

\begin{lemma}
Let $\phi, \psi\in C(G)$ and $u\in E_G.$ Then,
$$
d(\phi\psi)(u) = \phi(\pi(u))d\psi(u) + \psi(\pi(u))d\phi(u) + d\phi(u)d\psi(u).
$$
\end{lemma}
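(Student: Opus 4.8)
The plan is to reduce everything to the definition of $d$ recorded in Proposition 3.2.1, namely $d\phi(u) = \phi(\pi_+(u)) - \phi(\pi(u))$, together with the fact that the product $\phi\psi\in C(G)$ is ordinary pointwise multiplication, $(\phi\psi)(i) = \phi(i)\psi(i)$. First I would expand the left-hand side directly from the definition:
$$
d(\phi\psi)(u) = (\phi\psi)(\pi_+(u)) - (\phi\psi)(\pi(u)) = \phi(\pi_+(u))\psi(\pi_+(u)) - \phi(\pi(u))\psi(\pi(u)).
$$

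Next I would use the definition of $d$ in the reverse direction, writing $\phi(\pi_+(u)) = \phi(\pi(u)) + d\phi(u)$ and $\psi(\pi_+(u)) = \psi(\pi(u)) + d\psi(u)$, substitute both into the first product, multiply out the four resulting terms, and observe that $\phi(\pi(u))\psi(\pi(u))$ cancels against the second product in the display above. What remains is exactly $\phi(\pi(u))\,d\psi(u) + \psi(\pi(u))\,d\phi(u) + d\phi(u)\,d\psi(u)$, which is the claimed identity; symmetry in $\phi$ and $\psi$ is then manifest.

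There is no real obstacle here: the only point needing care is keeping the base point $\pi(u)$ and the end point $\pi_+(u)$ of the directed edge $u$ straight, noting that all three coefficient functions on the right-hand side are evaluated at the base point $\pi(u)$. It is worth flagging that the extra quadratic term $d\phi(u)\,d\psi(u)$ is precisely the obstruction to $d$ being a derivation — the expected discrete correction to the smooth Leibniz rule $d(\phi\psi) = \phi\,d\psi + \psi\,d\phi$ — and it is this term that ultimately produces the curvature-like quadratic corrections in Bochner's identity; one can also check, as a consistency test, that the identity is compatible with $d\phi(\overline{u}) = -d\phi(u)$.
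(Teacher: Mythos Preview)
Your proof is correct and is essentially the same elementary verification as the paper's: both arguments reduce the identity to the definition $d\phi(u)=\phi(\pi_+(u))-\phi(\pi(u))$ and a one-line algebraic expansion. The only cosmetic difference is that the paper starts by expanding the product $d\phi(u)\,d\psi(u)$ and regroups (adding and subtracting $2\phi(\pi(u))\psi(\pi(u))$) to isolate $d(\phi\psi)(u)$, whereas you start from $d(\phi\psi)(u)$ and substitute $\phi(\pi_+(u))=\phi(\pi(u))+d\phi(u)$; either direction yields the result immediately.
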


\begin{proof}
This identity is a piece of folklore but it's sufficiently important to warrant including a proof. Letting $u=ij$ we have,
\begin{align*}
d\phi(ij) d\psi(ij) & = \left[\phi(j)-\phi(i)\right] \left[\psi(j)-\psi(i)\right]\\
& = \phi(j)\psi(j) - \phi(i)\psi(j) - \psi(i)\phi(j) +\phi(i)\psi(i) \pm 2\phi(i)\psi(i)\\
& = \left[\phi(j)\psi(j) - \phi(i)\psi(i)\right] -\phi(i)\left[\psi(j)-\psi(i)\right] - \psi(i)\left[\phi(j)-\phi(i)\right]\\
& = d(\phi\psi)(ij) -\phi(i) d\psi(ij) - \psi(i) d\phi(ij),
\end{align*}
which is the desired statement.
\end{proof}

\begin{proposition}
Let $X\in\mathcal{X}(G)$ be a vector field on $G$. Then $X$ defines a first order differential operator by the rule,
$$
X\phi(i) = \sum_{\pi(u)=i} X(u)d\phi(u),
$$
or, equivalently,
$$
X\phi = \pi (X\negthickspace:\negthickspace\nabla\phi).
$$
1. The correspondence $X(u) = L(\pi(u), \pi_+(u))$ defines a bijection between $\mathcal{X}(G)$ and $DOp^1(G),$ the space of first order differential operators.

\mpar
2. If $\phi, \psi\in C(G)$ then,
$$
\nabla(\phi\psi) = \phi\negthinspace\cdot\negthickspace\nabla\psi + \psi\negthinspace\cdot\negthickspace\nabla\phi + \nabla\phi\negthickspace:\negthickspace\nabla\psi.
$$

3.  While $X$ is not a derivation, it does satisfy a quadratic extension of the Leibnitz rule,
$$
X(\phi\psi)(i) = \phi(i)X\psi(i) + \psi(i)X\phi(i) + \sum_{\pi(u)=i}X(u) d\phi(u)\psi(u),
$$
or, equivalently, $X\phi\psi = \phi X\psi + \psi X\phi +\pi(X\colon\negthickspace\nabla\phi\colon\negthickspace\nabla\psi).$

\mpar
4. If $X\equiv -2$ then $X=\Delta$ and,
$$
\Delta(\phi^2)(i) = 2\phi(i)\Delta\phi(i) -2 \sum_{\pi(u) = i} |d\phi(u)|^2.
$$
\end{proposition}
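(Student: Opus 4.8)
The plan is to treat the opening assertion and the four numbered items in sequence, reducing everything to the definitions of Definition 3.1, the formulas of Proposition 3.2, and the product rule for $d$ proved in the preceding Lemma. For the opening claim I would expand $X\phi(i)=\sum_{\pi(u)=i}X(u)\,d\phi(u)=\sum_{\pi(u)=i}X(u)\bigl(\phi(\pi_+(u))-\phi(i)\bigr)$ and, using that $G$ is simple so that $u=ij$ is the unique element of $V_{tG}$ with $\pi(u)=i$ and $\pi_+(u)=j$, rewrite it as $X\phi(i)=\sum_{j\in V_G}L(i,j)\phi(j)$ with $L(i,j)=X(ij)$ when $\{i,j\}\in E_G$, $L(i,i)=-\sum_{\pi(u)=i}X(u)$, and $L(i,j)=0$ otherwise. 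The support condition $L(i,j)=0$ for $d(i,j)>1$ is then visible by inspection, and $X$ annihilates functions constant on components because $d\phi\equiv0$ for such $\phi$; hence $X\in DOp^1(G)$. The equivalent form $X\phi=\pi(X\negthinspace:\negthinspace\nabla\phi)$ is just an unwinding of the definitions of $\nabla$, of the pointwise product, and of the pushforward $\pi$.

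For item 1 one direction is exactly the computation above: a vector field $X$ produces $L\in DOp^1(G)$ with edge entries $L(\pi(u),\pi_+(u))=X(u)$. For the converse, given $L\in DOp^1(G)$ I would set $X(u)=L(\pi(u),\pi_+(u))$, apply the operator rule, and check that the operator so obtained is again $L$. This is the one place where a little care beyond routine substitution is needed, and it is the step I would single out as the main (if modest) obstacle: condition (ii) of Definition 3.3 forces $\sum_{j}L(i,j)=0$, and this is precisely what recovers the diagonal entry $L(i,i)=-\sum_{j\sim i}L(i,j)$ from the off-diagonal ones, so that the two assignments are genuinely mutually inverse rather than one being merely a one-sided inverse of the other. (One also records that the matrix representation of a $k$-th order operator is canonical, $L(i,j)=Le_j(i)$, so nothing is lost in passing between operator and matrix.)

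Items 2--4 are then corollaries of the product rule for $d$. For item 2 I would apply that rule coefficientwise, $d(\phi\psi)(u)=\phi(\pi(u))\,d\psi(u)+\psi(\pi(u))\,d\phi(u)+d\phi(u)\,d\psi(u)$, and recognize the three families of coefficients as those of $\phi\negthinspace\cdot\negthinspace\nabla\psi$, $\psi\negthinspace\cdot\negthinspace\nabla\phi$, and $\nabla\phi\negthinspace:\negthinspace\nabla\psi$. For item 3 I would compute $X(\phi\psi)(i)=\sum_{\pi(u)=i}X(u)\,d(\phi\psi)(u)$, substitute the product rule, and pull $\phi(i)$ and $\psi(i)$ out of the first two sums (legitimate since $\pi(u)=i$ throughout), which leaves $\phi(i)X\psi(i)+\psi(i)X\phi(i)+\sum_{\pi(u)=i}X(u)\,d\phi(u)\,d\psi(u)$, i.e. $\phi(i)X\psi(i)+\psi(i)X\phi(i)+\pi(X\negthinspace:\negthinspace\nabla\phi\negthinspace:\negthinspace\nabla\psi)(i)$; equivalently one multiplies item 2 by $X$ and applies $\pi$. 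Finally, for item 4 the constant vector field $X\equiv-2$ gives $X\phi(i)=-2\sum_{\pi(u)=i}d\phi(u)=\Delta\phi(i)$ by the formula for $\Delta$ in Proposition 3.2, so $X=\Delta$; putting $\psi=\phi$ in item 3 and using $X(u)=-2$ then yields $\Delta(\phi^2)(i)=2\phi(i)\Delta\phi(i)-2\sum_{\pi(u)=i}|d\phi(u)|^2$. Apart from the bijection bookkeeping in item 1, the whole argument is a chain of substitutions.
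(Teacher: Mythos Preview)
Your proposal is correct and follows essentially the same route as the paper: the key step for item~1 is exactly the observation that condition~(ii) forces $\sum_j L(i,j)=0$, which lets one rewrite $L\phi(i)=\sum_j L(i,j)\bigl(\phi(j)-\phi(i)\bigr)=\sum_{\pi(u)=i}X(u)\,d\phi(u)$, and items~2--4 are handled by the product rule for $d$ exactly as you describe. Your treatment is somewhat more explicit about the two-sidedness of the bijection than the paper's, but the substance is the same.
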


\begin{proof}
Let $\psi= 1$ be constant. Then, for all $i\in V_G,$
$$
L\psi(i) = \sum_{j\in V_G} L(i,j) = 0,
$$
and therefore,
\begin{align*}
L\phi(i) & = \sum_{j\in V_G} L(i,j)\phi(j) = \sum_{j\in V_G} L(i,j)\left[\phi(j)-\phi(i)\right]\\
& = \sum_{\pi(u)=i} X(u) d\phi(u) = X\phi(i).
\end{align*}

\mpar
Item 2 follows from Lemma 3.4 and the definitions. The following calculation establishes item 3,
\begin{align*}
X(\phi\psi)(i) & = \sum_{\pi(u)=i} X(u) d(\phi\psi)(u)\\
& = \sum_{\pi(u)=i} X(u)\left[ \phi(i) d\psi(u) + \psi(i) d\phi(u) + d\phi(u) d\psi(u)\right]\\
& = \phi(i) X\psi(i) + \psi(i) X\phi(i) + \sum_{\pi(u) =i}X(u) d\phi(u)\psi(u).
\end{align*}

The statements about the Laplace operator $\Delta$ are self evident. 
\end{proof}

\begin{remark}
1. In this theory, the Laplacian $\Delta$ is a first order operator - a vector field, not a second order operator. While this may seem odd at first, it simply relfects the fact that $\Delta\phi(i)$ depends only on the differences $\phi(j)-\phi(i)$ where $d(i,j)=1.$

\mpar
2. Since the two tangent graphs of $G$ are themselves finite simple graphs they have a pair of tangent graphs of their own, $t(tG)=t^2G$ and $t\tau G,$ as well as gradients $\nabla_{tG}$ and $\nabla_{\tau G},$ divergence operators $\Div_{tG}$ and $\Div_{\tau G}$, and Laplace operators $\Delta_{tG}$ and $\Delta_{\tau G},$ which we think of as acting on $\mathcal{X}(G)$. 
\end{remark}

The next proposition presents an evaluation of two natural self adjoint operators on $C(G)$ defined in terms of these Laplacians. We'll see $\Div\circ\,\Delta_{tG}\circ\nabla$ is a second order operator, since it contains the square of $\Delta,$ whereas $\Div\circ\,\Delta_{\tau G}\circ\nabla$ is a third order operator, since it contains the cube of $\Delta,$ showing once again just how distinct the two notions of tangent graph really are. This result is not strictly necessary for the proof of Bochner's identity, although it is helpful in describing some of its consequences. However, some familiarity with the geometry of $\tau G$ is necessary for the proof and it can be gained here.

\begin{proposition}
Let $D, D_A\in\mathcal{X}(G)$ be vector fields with coefficients,
\begin{align*}
D(u) & = \deg\circ\,\pi_+(u) + \deg\circ\,\pi(u)\\
D_A(u) & =A\,\deg\circ\,\pi_+(u) + A\,\deg\circ\,\pi(u),
\end{align*}
where $A\phi$ is the adjacency operator. Then,
\begin{align*}
\Div\Delta_{tG}\nabla\phi(i) & = \Delta^2\phi(i) -4\Delta\phi(i) -4D\phi(i),\,\text{and,}\\
\Div\Delta_{\tau G}\nabla\phi(i) & = -\tfrac{1}{4}\Delta\left(A\deg\cdot\Delta^2\phi\right)\negthinspace(i) +\tfrac{1}{2}  \Delta\left(\deg\cdot A\deg\cdot\Delta\phi\right)\negthinspace(i) -2 D_A\phi(i).
\end{align*}
\end{proposition}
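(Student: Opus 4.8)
\emph{Sketch of the approach.} The plan is to prove both identities by the same two-step scheme: first compute the vector field $\Delta_{tG}\nabla\phi$ (respectively $\Delta_{\tau G}\nabla\phi$) explicitly as an element of $C(tG)=\mathcal{X}(G)$, and then apply $\Div$ coefficient by coefficient. Four elementary facts, all already available, will carry the argument. First, for any $\psi\in C(G)$ one has $\sum_{\pi(u)=i}d\psi(u)=-\tfrac12\Delta\psi(i)$, hence also $\sum_{\pi(u)=i}d\psi(\overline u)=\tfrac12\Delta\psi(i)$. Second, every vector field acts as a first order operator through $X\psi(i)=\sum_{\pi(u)=i}X(u)\,d\psi(u)$. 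Third, $d\psi(\overline u)=-d\psi(u)$, whereas $D$ and $D_A$ are invariant under $u\mapsto\overline u$ because $\pi$ and $\pi_+$ are interchanged by $\sigma$. Fourth, $\Div X(i)=\sum_{\pi(u)=i}\bigl(X(\overline u)-X(u)\bigr)$. I also use the degree formulas $\deg_{tG}(u)=\deg(\pi_+(u))+\deg(\pi(u))-1=D(u)-1$ and $\deg_{\tau G}(u)=A\deg(\pi(u))$, together with the Laplacian of a graph $H$ in the form $\Delta_H f(u)=2\deg_H(u)f(u)-2\sum_{v\sim_H u}f(v)$.

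\emph{The $tG$ identity.} Fix $u=ij$. By the definition of adjacency in $tG$, the $tG$-neighbours of $u$ are the directed edges based at $j$ together with the directed edges ending at $i$, and these two families overlap only in $\overline u=ji$. Summing $d\phi$ over them and using the first fact on each family (and $-d\phi(ji)=d\phi(ij)$) gives $\sum_{v\sim_{tG}u}d\phi(v)=\tfrac12\Delta\phi(i)-\tfrac12\Delta\phi(j)+d\phi(u)$; inserting this and $\deg_{tG}(u)=D(u)-1$ into the Laplacian formula yields
$$
\Delta_{tG}\nabla\phi(u)=2\bigl(D(u)-2\bigr)d\phi(u)+d(\Delta\phi)(u).
$$
Now apply $\Div$. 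Since $d\phi$ and $d(\Delta\phi)$ change sign under $u\mapsto\overline u$ while $D$ does not, $X(\overline u)-X(u)=-4\bigl(D(u)-2\bigr)d\phi(u)-2\,d(\Delta\phi)(u)$, and summing over $\pi(u)=i$ the three contributions are $-4D\phi(i)$, then $8\cdot\bigl(-\tfrac12\Delta\phi(i)\bigr)=-4\Delta\phi(i)$, then $-2\cdot\bigl(-\tfrac12\Delta^2\phi(i)\bigr)=\Delta^2\phi(i)$, which is the assertion.

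\emph{The $\tau G$ identity.} Fix $u=ij$. By the definition of adjacency in $\tau G$, the $\tau G$-neighbours of $u$ are exactly the directed edges whose base point is adjacent to $i$, so
$$
\sum_{v\sim_{\tau G}u}d\phi(v)=\sum_{k\sim i}\ \sum_{\pi(v)=k}d\phi(v)=\sum_{k\sim i}\bigl(-\tfrac12\Delta\phi(k)\bigr)=-\tfrac12(A\Delta\phi)(i),
$$
and with $\deg_{\tau G}(u)=A\deg(\pi(u))$ the Laplacian formula gives $\Delta_{\tau G}\nabla\phi(u)=2\,A\deg(\pi(u))\,d\phi(u)+(A\Delta\phi)(\pi(u))$. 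Applying $\Div$: the first, degree-weighted term contributes $-2D_A\phi(i)$ (using $A\deg(\pi(u))+A\deg(\pi_+(u))=D_A(u)$ and the first-order-operator formula), while the second term, which depends only on the base point of $u$, contributes $\sum_{\pi(u)=i}d(A\Delta\phi)(u)=-\tfrac12\Delta(A\Delta\phi)(i)$. Finally one rewrites $\Delta(A\Delta\phi)$ in the stated degree-weighted, $\Delta$-only shape: substituting the identity $A\psi=\deg\cdot\psi-\tfrac12\Delta\psi$ (a rearrangement of the Laplacian formula) into $A\Delta\phi$ and then applying $\Delta$ produces the cube of $\Delta$ together with the lower order degree-weighted terms.

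I expect no real obstacle here, in keeping with the paper's remark that the identity reduces to a direct computation; the only points needing care are (i) keeping straight the distinction between ``directed edges based at $i$'' and ``directed edges ending at $i$'', which is essential because adjacency in $\tau G$ constrains only base points whereas adjacency in $tG$ constrains both endpoints, so the neighbour-sums resolve into $\Delta$ (for $tG$) versus $A\Delta$ (for $\tau G$); and (ii) tracking the sign and coefficient changes when $X(\overline u)-X(u)$ is passed through the $d$-terms (which flip) versus the $\sigma$-invariant weights $D,D_A$ (which do not) --- the factor $2$ from the Laplacian, a second factor $2$ from $\Div$, and this sign flip are exactly what produce the coefficients $-4$, $8$, $-2$ in the first identity and their counterparts in the second, as well as the passage from second order ($\Delta^2$) to third order ($\Delta^3$) between the two.
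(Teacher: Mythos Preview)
Your treatment of the $tG$ identity is correct and is essentially the paper's argument: the paper organises the neighbours of $u$ in $tG$ by the orientation label $\omega(\alpha)\in\{+1,0,-1\}$, which is exactly your split into ``based at $j$'' and ``ending at $i$'' with the overlap $\overline u$, and it arrives at the identical intermediate formula $\Delta_{tG}\nabla\phi(u)=d\Delta\phi(u)+2\bigl(D(u)-2\bigr)d\phi(u)$ before taking $\Div$.

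For the $\tau G$ identity your computation up to
\[
\Div\Delta_{\tau G}\nabla\phi(i)=-\tfrac12\,\Delta(A\Delta\phi)(i)-2D_A\phi(i)
\]
is correct, and it is in fact a cleaner route than the paper's (the paper re-indexes via $w=d\pi(a)$ to reach the equivalent expression $-\tfrac12\Delta^2\phi(\pi(u))+\deg(\pi(u))\Delta\phi(\pi(u))+2A\deg(\pi(u))\,d\phi(u)$ before taking $\Div$). The gap is your last sentence. Substituting $A\psi=\deg\cdot\psi-\tfrac12\Delta\psi$ into $-\tfrac12\Delta(A\Delta\phi)$ gives
\[
\tfrac14\,\Delta^3\phi(i)-\tfrac12\,\Delta(\deg\cdot\Delta\phi)(i),
\]
which is \emph{not} the displayed formula $-\tfrac14\Delta(A\deg\cdot\Delta^2\phi)+\tfrac12\Delta(\deg\cdot A\deg\cdot\Delta\phi)$: the latter carries $A\deg$ weights that your expression never produces, and the two are genuinely different functions (e.g.\ on $P_3$ with $\phi=e_1$ your formula gives $(12,-16,4)$ while the proposition's gives $(0,8,-8)$; the direct evaluation of $\Div\Delta_{\tau G}\nabla\phi$ agrees with yours). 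The discrepancy is not your fault: in the paper's evaluation of its term $I$, a redundant outer summation $\sum_{\pi(a)=u}$ is carried through after the re-indexing over $w$, which inflates $I$ by the spurious factor $|\{a:\pi(a)=u\}|=\deg_{\tau G}(u)=A\deg(\pi(u))$; this extra factor is what produces the $A\deg$ weights in the stated proposition (and the paper's own $\Div$ computation also disagrees in sign with the displayed statement). So your derivation is the right one, but you should not expect the final substitution to recover the proposition as printed.
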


\begin{remark}
It will be helpful to state and prove the following lemma before taking up the proof of the proposition. We use the notation $\sigma_{tG}$ for the involution on $t^2G$ as well as $\pi_{\tau G}$ for the projection from $t\tau G$ to $\tau G.$ To reduce notational clutter, we will drop these notations when familiarity or context removes any potential ambiguity. Note that item 2 of the lemma will be used in the proof of Bochner's identity.
\end{remark}

\begin{lemma}
1. There is a $\mathbb{Z}_2\times\mathbb{Z}_2$ action on $V_{t^2G}$ generated by the involutions $\sigma_{tG}$ and $d\sigma.$ Specifically, if $\alpha = ij/kl \in V_{t^2G}$ then,
$$
\sigma_{tG}(\alpha) = kl/ij,\qquad d\sigma(\alpha) = ji/lk, \qquad \sigma_{tG}\circ d\sigma(\alpha) = lk/ji = d\sigma\circ\sigma_{tG}(\alpha).
$$

The group action on $V_{t^2G}$ extends to a group action of graph homomorphisms on $t^2G.$

\mpar\mpar
2. For every $i\in V_G$ let,
$$
V^i_{t\tau G} =\{a\in V_{t\tau G}\mid \pi^2(a) =\pi\circ\pi_{\tau G}(a) =i\}
$$

be the set of all vertices of $t\tau G$ based at vertex $i$ of $G$. Then,
\begin{align*}
V^i_{t\tau G} & =\{a\in V_{t\tau G}\mid \pi_{\tau G}(a) = w,\, \pi(w) = i\}\\
& = \{a\in V_{t\tau G}\mid d\pi(a) = u,\, \pi(u) = i\}.
\end{align*}
\end{lemma}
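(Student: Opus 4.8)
The plan is to prove both parts by unwinding the definitions and computing directly on vertices, using only the explicit formula $dh(u)=h(\pi(u))\,h(\pi_+(u))$ for the differential of a homomorphism, the functoriality of $G\mapsto tG$ (Proposition 2.6, Remark 2.7), the fact that $\sigma\colon tG\to tG$ is a homomorphism (Proposition 2.3.1) and $\pi\colon\tau G\to G$ is a homomorphism (Proposition 2.3.4), together with the identities $\pi(dh(u))=h(\pi(u))$ and $\pi_+(dh(u))=h(\pi_+(u))$ that appear inside the proof of Proposition 2.6.

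For part 1, I would first observe that $\sigma_{tG}$ is nothing but the involution of Definition 2.1.2 applied to the graph $tG$: it is an involution of $V_{t^2G}$ sending $ij/kl\mapsto kl/ij$, and it is a graph homomorphism $t^2G\to t^2G$ by Proposition 2.3.1 with $tG$ in place of $G$. For $d\sigma$, applying the differential formula to the homomorphism $\sigma\colon tG\to tG$ gives $d\sigma(ij/kl)=\sigma(ij)/\sigma(kl)=ji/lk$, as claimed; it is a graph homomorphism of $t^2G$ by Remark 2.7, and the one-line substitution $d\sigma(d\sigma(ij/kl))=d\sigma(ji/lk)=ij/kl$ shows it is an involution. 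A direct substitution then yields $\sigma_{tG}\circ d\sigma(ij/kl)=lk/ji=d\sigma\circ\sigma_{tG}(ij/kl)$, so the two involutions commute. Consequently sending the two generators of $\mathbb{Z}_2\times\mathbb{Z}_2$ to $\sigma_{tG}$ and $d\sigma$ is a well-defined group action, and since $\sigma_{tG}$, $d\sigma$, and their composite are bijective graph homomorphisms of $t^2G$, every group element acts as a graph automorphism, which is the asserted extension.

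For part 2, I would fix the convention that a vertex $a\in V_{t\tau G}$ is a directed edge $w/w'$ of $\tau G$, so that $w,w'\in V_*=V_{\tau G}$ with $\{\pi(w),\pi(w')\}\in E_G$; then $\pi_{\tau G}(a)=w$, and since $\pi\colon\tau G\to G$ is a homomorphism the vertex $d\pi(a)=\pi(w)\pi(w')\in V_{tG}$ is well defined. The first claimed equality is then a tautology: the condition $\pi^2(a)=\pi(\pi_{\tau G}(a))=i$ is literally the condition $\pi_{\tau G}(a)=w$ with $\pi(w)=i$. The second equality rests on the observation, already implicit in the proof of Proposition 2.6, that the base point of $d\pi(a)$ equals $\pi(\pi_{\tau G}(a))$; indeed $d\pi(a)=\pi(w)\pi(w')$ has base point $\pi(w)$. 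Hence the condition $d\pi(a)=u$ with $\pi(u)=i$ is again exactly $\pi(w)=i$, so all three descriptions of $V^i_{t\tau G}$ coincide. I do not expect a genuine obstacle here: the only thing to watch is the bookkeeping in the towers $G\leftarrow tG\leftarrow t^2G$ and $G\leftarrow\tau G\leftarrow t\tau G$, keeping the projection $\pi_{\tau G}\colon V_{t\tau G}\to V_{\tau G}$ carefully distinct from the differential $d\pi\colon V_{t\tau G}\to V_{tG}$; once the conventions $\alpha=ij/kl$ and $a=w/w'$ are fixed, every identity is immediate.
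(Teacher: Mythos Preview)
Your proposal is correct and follows essentially the same approach as the paper: both parts are handled by unwinding definitions, with the homomorphism claims in part 1 reduced to Proposition 2.3.1 (for $\sigma_{tG}$) and Proposition 2.6/Remark 2.7 (for $d\sigma$), and part 2 resting on the single observation that $\pi(\pi_{\tau G}(a))=\pi(d\pi(a))$. Your write-up supplies more explicit verification of the commutation and involution identities than the paper does, but there is no substantive difference in strategy.
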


\begin{proof}
These two items really just follow from the definitions, as a few diagrams make clear. 

\begin{figure} [h]
\begin{tikzpicture}
\draw[fill=black] (0,0) circle (2pt);
\draw[fill=black] (1,0) circle (2pt);
\draw[fill=black] (2,0) circle (2pt); 

\node at (0,-0.30) {\textit{i}};
\node at (1,-0.3) {\textit{j}};
\node at (2, -0.3) {\textit{k}};
\node at (1, 0.5) {$\alpha$};

\draw[thick,-Latex] (0,0) -- (0.65,0);
\draw[thick]  (0.6,0) -- (1,0);
\draw[thick,->] (1,0) -- (1.55,0);
\draw[thick] (1.5,0) -- (2,0);

\draw[fill=black] (3,0) circle (2pt);
\draw[fill=black] (4,0) circle (2pt);
\draw[fill=black] (5,0) circle (2pt);

\node at (3,-0.30) {\textit{i}};
\node at (4,-0.3) {\textit{j}};
\node at (5, -0.3) {\textit{k}};
\node at (4,0.5) {$\sigma_{tG}(\alpha)$};

\draw[thick,->] (3,0) -- (3.55,0);
\draw[thick]  (3.5,0) -- (4,0);
\draw[thick,-Latex] (4,0) -- (4.65,0);
\draw[thick] (4.4,0) -- (5,0);

\draw[fill=black] (6,0) circle (2pt);
\draw[fill=black] (7,0) circle (2pt);
\draw[fill=black] (8,0) circle (2pt);

\node at (6,-0.30) {\textit{i}};
\node at (7,-0.3) {\textit{j}};
\node at (8, -0.3) {\textit{k}};
\node at (7,0.5) {$d\sigma(\alpha)$};

\draw[thick] (6,0) -- (6.4,0);
\draw[thick, Latex-]  (6.35,0) -- (7,0);
\draw[thick,-<] (7,0) -- (7.55,0);
\draw[thick] (7.4,0) -- (8,0);

\draw[fill=black] (9,0) circle (2pt);
\draw[fill=black] (10,0) circle (2pt);
\draw[fill=black] (11,0) circle (2pt);

\node at (9,-0.30) {\textit{i}};
\node at (10,-0.3) {\textit{j}};
\node at (11, -0.3) {\textit{k}};
\node at (10,0.5) {$\sigma_{tG}\circ d\sigma(\alpha)$};

\draw[thick,-<] (9,0) -- (9.55,0);
\draw[thick]  (9.35,0) -- (10,0);
\draw[thick,Latex-] (10.35,0) -- (11,0);
\draw[thick] (10,0) -- (10.4,0);
\end{tikzpicture}
\caption{The group acting on $\alpha = ij/jk.$ The directed edge $\pi(\cdot)$ is indicated by a solid arrowhead and the directed edge $\pi_+(\cdot)$ is indicated by an open arrowhead.}
\end{figure}
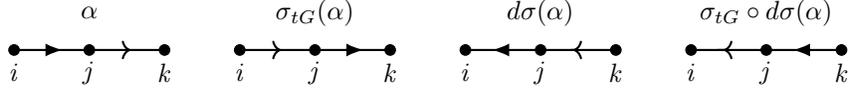

\mpar
That these involutions generate $\mathbb{Z}_2\times\mathbb{Z}_2$ is a straightforward calculation. (See Figure 3). The functions $\sigma_{tG}$ and $d\sigma$ are graph homomorphisms by Propositions 2.1.2 and 2.6. 

\begin{figure} [h]
\quad
\begin{tikzpicture}
\draw[fill=black] (0,1) circle (2pt);
\draw[fill=black] (0.3,0) circle (2pt);
\draw[fill=black] (1.5,0) circle (2pt); 
\draw[fill=black] (2, 0.9) circle (2pt);

\draw[thick] (0,1) -- (0.3,0) -- (1.5,0) -- (2,0.9);

\node at (-0.2,0.5) {\textit{u}};
\node at (2.075,0.5) {\textit{v}};
\node at (0.3,-0.35) {$\pi(u)$};
\node at (1.5,-0.35) {$\pi(v)$};

\draw[thick, ->] (0.3,0) -- (0.15,0.5);
\draw[thick, ->] (1.5,0) -- (1.75, 0.46);


\draw[fill=black] (5,1) circle (2pt);
\draw[fill=black] (5.3,0) circle (2pt);
\draw[fill=black] (6.5,0) circle (2pt); 
\draw[fill=black] (7, 0.9) circle (2pt);

\draw[thick] (5,1) -- (5.3,0) -- (6.5,0) -- (7,0.9);

\node at (4.3,0.5) {$u=\pi(a)$};
\node at (7.75,0.5) {$v=\pi_+(a)$};
\node at (5.95,-0.35) {$w=d\pi(a)$};

\draw[thick, -Latex] (5.3,0) -- (5.134,0.58);
\draw[thick, ->] (6.5,0) -- (6.75, 0.46);
\draw[thick, ->>] (5.3, 0) -- (6.1,0);
\end{tikzpicture}
\caption{The diagram on the left depicts the edge $\{u,v\}$ in $\tau G$ and the diagram on the right depicts the corresponding vertex $a=uv$ in $t\tau G.$ Here $w=d\pi(a) = \pi(u)\pi(v).$ In the right hand diagram, a solid arrowhead indicates $\pi(a),$ an open arrowhead indicates $\pi_+(a),$ and a double arrowheads indicates $d\pi(a).$}
\end{figure}
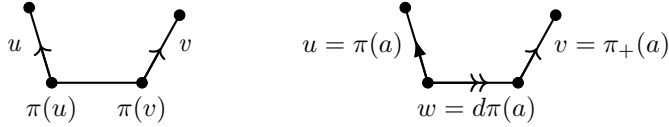

\mpar
The key to item 2 is the observation that if $a\in V_{t\tau G}, w=\pi(a)$ and $u=d\pi(a)$ then $\pi(w)=\pi(u).$ Thus, $V^i_{t\tau G}$ can be indexed in two distinct ways, via the value of $\pi(a)$ or the value of $d\pi(a).$ (See Figure 4).
\end{proof}
Here is the proof of Proposition 3.6.
\begin{proof}
Let's begin by looking closely at the edges of $tG$ or, equivalently, the vertices of $t^2G$ by giving names to their various parts. If $\alpha\in V_{t^2G}$ then,
$$
 u=\pi(\alpha)\in V_{tG},\,\,v=\pi_+(\alpha)\in V_{tG}\,\, \text{and}\,\, \{u,v\}\in E_{tG}.
$$ 
By iteration we have,
\begin{align*}
i &=\pi(u)=\pi\circ\pi(\alpha)\in V_G,\,\, j=\pi_+(u)=\pi_+\circ\pi(\alpha)\in V_G,\,\, \text{and}\,\,\{i,j\}\in E_G,\\
k & =\pi(v)=\pi\circ\pi_+(\alpha)\in V_G,\,\, l=\pi_+(v)=\pi_+\circ\pi_+(\alpha)\in V_G,\,\, \text{and}\,\,\{k,l\}\in E_G.\\
\end{align*} 

We denote $\alpha$ in various ways, as convenient, such as,
$$
\alpha=\pi(\alpha)\pi_+(\alpha) = uv = ij/kl,
$$
where either $j=k$ or $i=l.$
\mpar
Recall that $\alpha$ represents either a forward translation, backward translation, or reflection of $\pi(\alpha)$ and we write their \textit{orientation} $\omega(\alpha) = +1, -1,$ or $0$ accordingly. So, for example, $\omega(\alpha)=1$ means $l=j, k\neq i, \omega(\alpha)=-1$ means $i=k, l\neq j$ and $\omega(\alpha)=0$ means $i=k, j=l,$ as indicated in Figure 5.

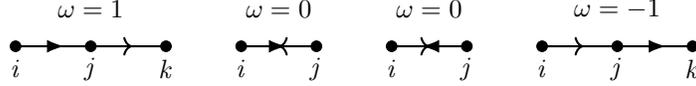
\begin{figure} [h]
\begin{tikzpicture}
\draw[fill=black] (0,0) circle (2pt);
\draw[fill=black] (1,0) circle (2pt);
\draw[fill=black] (2,0) circle (2pt); 

\node at (0,-0.30) {\textit{i}};
\node at (1,-0.3) {\textit{j}};
\node at (2, -0.3) {\textit{k}};
\node at (1, 0.5) {$\omega =1$};

\draw[thick,-Latex] (0,0) -- (0.65,0);
\draw[thick]  (0.6,0) -- (1,0);
\draw[thick,->] (1,0) -- (1.55,0);
\draw[thick] (1.5,0) -- (2,0);

\draw[fill=black] (3,0) circle (2pt);
\draw[fill=black] (4,0) circle (2pt);

\node at (3,-0.30) {\textit{i}};
\node at (4,-0.3) {\textit{j}};
\node at (3.5,0.5) {$\omega=0$};

\draw[thick,<-] (3.5,0) -- (4,0);
\draw[thick,-Latex] (3,0) -- (3.6,0);

\draw[fill=black] (5,0) circle (2pt);
\draw[fill=black] (6,0) circle (2pt);

\node at (5,-0.30) {\textit{i}};
\node at (6,-0.3) {\textit{j}};
\node at (5.5,0.5) {$\omega=0$};

\draw[thick,->] (5,0) -- (5.5,0);
\draw[thick, Latex-] (5.4,0) -- (6,0);

\draw[fill=black] (7,0) circle (2pt);
\draw[fill=black] (8,0) circle (2pt);
\draw[fill=black] (9,0) circle (2pt);

\node at (7,-0.30) {\textit{i}};
\node at (8,-0.3) {\textit{j}};
\node at (9, -0.3) {\textit{k}};
\node at (8,0.5) {$\omega=-1$};

\draw[thick,->] (7,0) -- (7.55,0);
\draw[thick]  (7.5,0) -- (8,0);
\draw[thick,-Latex] (8,0) -- (8.65,0);
\draw[thick] (8.4,0) -- (9,0);
\end{tikzpicture}
\caption{Typical vertices $\alpha\in t^2G$ and their orientations. Solid arrowheads indicate $\pi(\alpha)$ and open arrowheads indicate $\pi_+(\alpha).$}
\end{figure}

Let's compute $\Delta_{tG}\nabla\phi$ first and then take the divergence. Note that we use the following facts without special mention, $(1)\, d\phi(\overline{u}) = -d\phi(u),$ (2) if $ u\neq w$ and $\pi(u)=\pi(w)$ then $u\overline{w}\in V_{t^2G}$ and $\omega(u\overline{w}) =-1,$ (3) if $\omega(\alpha)=1$ then $\pi(\pi_+(\alpha))=\pi_+(\pi(\alpha)),$ and (4) if $\omega(\alpha)=-1$ then $\pi_+(\pi(\alpha))=\pi(\pi(\alpha)).$ We also use the fact that $\Div X(i)=\sum_{\pi(u)=i} X(\overline{u})-X(u).$

\mpar
By the definition of the Laplacian we have,
\begin{align*}
\Delta_{tG}\nabla\phi(u) & = -2\sum_{\substack{\alpha\in V_{t^2G}\\ \pi(\alpha)=u}}\left[\nabla\phi(\pi_+(\alpha))-\nabla\phi(u)\right]\\
& = -2\negthickspace\negthickspace\sum_{\substack{\omega(\alpha) = 0,1\\ \pi(\alpha)=u}}\negthickspace\negthickspace d\phi(\pi_+(\alpha)) -2\negthickspace\negthickspace\sum_{\substack{\omega(\alpha) = -1\\ \pi(\alpha)=u}}\negthickspace\negthickspace d\phi(\pi_+(\alpha)) + 2\negthickspace\negthickspace\sum_{\pi(\alpha)=u}\negthickspace d\phi(u)\\
& = I + II + III.
\end{align*}
Evidently,
$$
III = 2\, \deg_{tG}(u)d\phi(u) = 2(\deg(\pi_+(u)) + \deg(\pi(u)) - 1) d\phi(u)= 2(D(u)-1)d\phi(u).
$$
Next, we have,
$$
 I = -2\negthickspace\negthickspace\sum_{\substack{\omega(\alpha)=0,1\\ \pi(\alpha)=u}}\negthickspace\negthickspace d\phi(\pi_+(\alpha))
 = -2\negthickspace\negthickspace\sum_{\pi(v)=\pi_+(u)}\negthickspace\negthickspace\negthickspace d\phi(v) = \Delta\phi(\pi_+(u)),
 $$
and then we have,
\begin{align*}
II & = -2\negthickspace\negthickspace\sum_{\substack{\omega(\alpha)=-1\\ \pi(\alpha)=u}}\negthickspace\negthickspace d\phi(\pi_+(u)) \pm 2\, d\phi(\overline{u})\\
& = -2\negthickspace\negthickspace\negthickspace\sum_{\pi(w)=\pi(u)}\negthickspace\negthickspace d\phi(\overline{w}) +2\, d\phi(\overline{u})\\
& = \phantom{-} 2\negthickspace\negthickspace\sum_{\pi(w)=\pi(u)}\negthickspace\negthickspace d\phi(w) -2\, d\phi(u)\\ 
& = -\Delta\phi(\pi(u)) -2\, d\phi(u).
\end{align*}

Adding up these terms yields,
$$
\Delta_{tG}\nabla\phi(u) = d\Delta\phi(u) -4\,d\phi(u) + 2D(u) d\phi(u).
$$
Therefore, $\Div\Delta_{tG}\nabla\phi(i)$ equals,
\begin{align*}
\sum_{\pi(u)=i}\negthickspace&\left(\left[d\Delta\phi(\overline{u}) -d\Delta\phi(u)\right]-4\left[d\phi(\overline{u}) -d\phi(u)\right] +2\left[D(\overline{u})\phi(\overline{u})-D(u)d\phi(u)\right]\right)\\
& = -2\sum_{\pi(u)=i}d\Delta\phi(u) +8\sum_{\pi(u)=i}d\phi(u) -4\sum_{\pi(u)=i}D(u)d\phi(u)\\
& = \Delta^2\phi(i) -4\Delta\phi(i) -4D\phi(i).
\end{align*}
Note that in the last line we used the fact that $D$ is even: $D(\overline{u})=D(u).$ 

\mpar
Next, let's look closely at the edges of $\tau G$ or, equivalently, the vertices of $t\tau G$ just as we did with $t^2G$. If $a\in V_{t\tau G}$ then,
$$
u=\pi(a)\in V_{tG},\, v=\pi_+(a)\in V_{tG}, \,\,\text{and}\,\,\{u,v\}\in E_{\tau G}.
$$
This implies $\{\pi(u), \pi(v)\}\in E_G$ hence,
$$
w=d\pi(a) =d\pi(uv) =\pi(u)\pi(v)\in V_{tG}.
$$
These relationships were depicted in Figure 4. As we did before, let's first calculate $\Delta_{\tau G}\nabla\phi$ then take the divergence. We have,
\begin{align*}
\Delta_{\tau G}\nabla\phi(u)  & = -2\sum_{\substack{a\in V_{t\tau G}\\\pi(a)=u}}\left[\nabla\phi(\pi_+(a))-\nabla\phi(u)\right]\\
& = I + II,
\end{align*}
where,
$$
II = 2\sum_{\pi(a)=u} d\phi(u) = 2\deg_{\tau G}(u) d\phi(u) = 2A\deg(\pi(u))d\phi(u).
$$
To evaluate term $I$ it's helpful to decompose the vertex set of $t\tau G$ into disjoint pieces labelled by the value of $\pi(\cdot)$ and $d\pi(\cdot)$ as follows,
$$
\{ a\in V_{t\tau G}\mid \pi(a)=u\} = \bigcup_{\substack{w\in V_{tG}\\\pi(w)=\pi(u)}}\{a\in V_{t\tau G}\mid\pi(a)=u, d\pi(a)=w\}.
$$
Note that that $\pi_+(d\pi(a))=\pi(\pi_+(a))$ which we use in the third line below (See Lemma 3.9.2 and Figure 4). We have,
\begin{align*}
I & = -2\sum_{\substack{a\in V_{t\tau G}\\\pi(a)=u}}\nabla\phi(\pi_+(a))\\
& = -2\sum_{\pi(a)=u}\, \sum_{\pi(w)=\pi(u)}\, \sum_{d\pi(a)=w}\negthickspace d\phi(\pi_+(a))\\
& = -2\sum_{\pi(a)=u}\,\sum_{\pi(w)=\pi(u)}\,\sum_{\pi(v)=\pi_+(w)}\negthickspace\negthickspace\negthickspace  d\phi(v)\\
& = \sum_{\pi(a)=u}\,\sum_{\pi(w)=\pi(u)}\negthickspace \negthickspace \Delta\phi(\pi_+(w))\\
& = \sum_{\pi(a)=u}\,\sum_{\pi(w)=\pi(u)}\negthickspace\negthickspace \left[\Delta\phi(\pi_+(w))\pm\Delta\phi(\pi(u))\right]\\
& = \sum_{\pi(a)=u}\negthickspace \left[-\tfrac{1}{2}\Delta^2\phi(\pi(u)) + \deg(\pi(u))\Delta\phi(\pi(u))\right]\\
& = -\tfrac{1}{2}A\deg(\pi(u))\Delta^2\phi(\pi(u)) +\deg(\pi(u))A\deg(\pi(u))\Delta\phi(\pi(u)).
\end{align*}

Note that we used the fact that $\pi(w)=\pi(u)$ in the second to last line. Finally, we find $\Delta_{\tau G}\nabla\phi(u) = I+II$ is equal to, 
$$
-\tfrac{1}{2}A\deg(\pi(u))\Delta^2\phi(\pi(u))+\deg(\pi(u))A\deg(\pi(u))\Delta\phi(\pi(u)) + 2A\deg(\pi(u))d\phi(u).
$$
\mpar
Let's label these terms $III, IV, V$ and take their divergence one term at a time. We have,
\begin{align*}
\Div III(i) & = -\tfrac{1}{2}\sum_{\pi(u)=i}\left[A\deg(\pi(\overline{u}))\Delta^2\phi(\pi(\overline{u}))-A\deg(i)\Delta^2\phi(i)\right]\\
& = -\tfrac{1}{2}\sum_{\pi(u)=i} [A\deg(\pi_+(u))\Delta^2\phi(\pi_+(u)) - A\deg(i)\Delta^2\phi(i)]\\
& = \tfrac{1}{4}\Delta\left(A\deg\cdot\,\Delta\phi^2\right)(i),\,\,\text{and,}\\
\Div IV(i) & = \negthickspace\sum_{\pi(u)=i}\negthickspace\left[\deg(\pi(\overline{u}))A\deg(\pi(\overline{u}))\Delta^2\phi(\pi(\overline{u}))-\deg(i)A\deg(i)\Delta^2\phi(i)\right]\\
& = \sum_{\pi(u)=i} [\deg(\pi_+(u))A\deg(\pi_+(u))\Delta^2\phi(\pi_+(u)) - \deg(i) A\deg(i)\Delta^2\phi(i)]\\
& = -\tfrac{1}{2}\Delta\left(\deg\cdot\,A\deg\cdot\,\Delta^2\phi\right)(i),\,\,\text{and finally,}\\
\Div V(i) & = 2\sum_{\pi(u)=i}\left[A\deg(\pi(\overline{u}) )d\phi(\overline{u})-A\deg(\pi(u)) d\phi(u)\right]\\
& = -2\sum_{\pi(u)=i}D_A(u)d\phi(u)\\
& = -2D_A\phi(i).
\end{align*}
The proof is finished upon adding up the terms.
\end{proof}

We close this section by defining the Hessian of a function which we think of as  an operator from $C(G)$ to sections of a new bundle on $G$ called the second complete tangent bundle.
\begin{definition}
1. A \textit{vector bundle} on $G$ is the coproduct,
$$
E=\coprod_{i\in V_G} E_i
$$ 
of a set finite dimensional vector spaces, called the \textit{fibers} of $E$, indexed by the vertices of $G$. $E$ inherits an adjacency relation from $G$ by declaring $X_i\in E_i$ is adjacent to $X_j\in E_j$ provided $\{i,j\}\in E_G.$ 

\mpar 
2. A \textit{section} of $E$ is a map $s\colon V_G\to E$ such that $s(i)\in E_i$ for all $ i\in V_G$ and the space of all sections is,
$$
\mathcal{X}(E)\cong\bigoplus_{i\in V_G} E_i
$$ 
(When $E$ is the tangent bundle we use abbreviated notation $\mathcal{X}(T(G))=\mathcal{X}(G))$.  

\mpar
3. The \textit{second tangent bundle} of $G$ is the bundle $T^2(G)$ determined by the fibers,
$$
T^2_i(G) = \langle e_{\alpha}\mid \alpha\in V_{t^2G}, \pi^2(\alpha) =i\rangle
$$
and the \textit{second complete tangent bundle} of $G$ is the bundle $\mathcal{T}^2(G)$ determined by the fibers,
$$
\mathcal{T}^2_i(G) = \langle e_a\mid a\in V_{t\tau G}, \pi^2(a) = i\rangle.
$$
Observe that $T^2(G)$ is a sub-bundle of $\mathcal{T}^2(G)$ in the sense that $T^2_i(G)$ is a vector subspace of $\mathcal{T}^2_i(G)$ for each vertex $i,$ and therefore $\mathcal{X}(T^2(G))$ is a subspace of $\mathcal{X}(\mathcal{T}^2(G)).$

\mpar
4. The $\textit{Hessian}$ is the map $\Hess\colon C(G)\to\mathcal{X}(T^2(G))$ defined by the formula,
$$
\Hess(\phi)_i = \sum_{\pi^2(\alpha)=i}\left[d\phi(\pi_+(\alpha))-d\phi(\pi(\alpha))\right] e_\alpha.
$$

The \textit{complete Hessian} is the map $\Hess_{\tau G}\colon C(G)\to\mathcal{X}(\mathcal{T}(G))$ defined by the formula,
$$
\Hess_{\tau G}(\phi)_i = \sum_{\pi^2(a)=i}\left[d\phi(\pi_+(a))-d\phi(\pi(a))\right]e_a.
$$
When no confusion can arise, we use the abbreviated notation, 
$$
d^2\phi(a) = d\phi(\pi_+(a))- d\phi(\pi(a))=d_{\tau G}\,d\phi(a).
$$

\mpar
5. The fibers $\mathcal{T}^2_i(G)$ carry a natural orthonormal inner product by declaring $\langle e_a, e_b\rangle = 1$ if $a=b$ and zero, otherwise. This inner product is inherited by the subspaces $T^2_i(G).$ Observe that,
$$
|\Hess_{\tau G}(\phi)|^2(i) = \sum_{\pi^2(a)=i} |d^2\phi(a)|^2 = \langle\nabla_{\tau G}\nabla\phi, \nabla_{\tau G}\nabla\phi\rangle_{\mathcal{T}^2_i(G)}.
$$
\end{definition}

\section{Proof of the Identity}
Let $\phi\in C(G)$ and think of, 
$$
|\nabla\phi|^2(i) = \langle \nabla\phi, \nabla\phi\rangle_{T_i(G)} = \sum_{\pi(u)=i}|d\phi(u)|^2
$$ 
as a function on $G$ whose Laplacian we'd like to compute. In doing so, its convenient to think of,
$$
|d\phi(u)|^2 = (d\phi)^2(u)
$$ 
as the square of a function on $C(tG).$ At an important point in the calculation we appeal to the fact that,
\begin{align*}
\{a\in V_{t\tau G}\mid \pi^2(a) =i\} & = \{a\in V_{t\tau G}\mid \pi(a)=u,\, \pi(u)=i\}\\
& = \{a\in V_{t\tau G}\mid d\pi(a)=w,\, \pi(w)=i\}
\end{align*}
to justify a change of variables in a double sum. This was proved in Lemma 3.9.2.
\mpar
We have,
\begin{align*}
\Delta |\nabla\phi|^2(i) & = -2\sum_{\pi(u)=i}d|\nabla\phi|^2(u)\\
& = -2\sum_{\pi(u)=i}\left[\,|\nabla\phi|^2(\pi_+(u)) -|\nabla\phi|^2(i)\,\right]\\
& = -2\sum_{\pi(u)=i}\thickspace\thickspace\thickspace [\negthickspace\negthickspace\negthickspace\sum_{\pi(v)=\pi_+(u)}\negthickspace\negthickspace(d\phi)^2(v) - \sum_{\pi(w)=i}(d\phi)^2(w)\thickspace\thickspace\negthickspace ]\\
& = -2\sum_{\pi(u)=i}\sum_{d\pi(a)=u}[ (d\phi)^2(\pi_+(a) )- (d\phi)^2(\pi(a))]\\
& = -2\sum_{\pi(w)=i}\sum_{\pi(a)=w} [ (d\phi)^2(\pi_+(a))-(d\phi)^2(w) ]\\
& = \sum_{\pi(w)=i}\Delta_{\tau G}(d\phi)^2(w),
\end{align*}
or, equivalently,
$$
\Delta|\nabla\phi|^2 = \pi(\Delta_{\tau G}(\nabla\phi\negthinspace:\negthinspace\negthinspace\nabla\phi)).
$$
\mpar
Recall the product rule for Laplacians in Proposition 3.5.4 as applied to $\Delta_{\tau G}(d\phi)^2(u)$ gives us,
$$
\Delta_{\tau G} (d\phi)^2(u) = 2 d\phi(u)\Delta_{\tau G}d\phi(u) -2\sum_{\pi(a)=u}|d^2\phi(a)|^2.
$$

Then we have,
\begin{align*}
\Delta|\nabla\phi|^2(i) & =\sum_{\pi(u)=i}\Delta_{\tau G}(d\phi)^2(u)\\
& = 2\sum_{\pi(u)=i} d\phi(u)\Delta_{\tau G} d\phi(u) -2\sum_{\pi(u)=i}\sum_{\pi(a) =u}|d^2\phi(a)|^2]\\
& = 2\nabla\phi\cdot\Delta_{\tau G}\nabla\phi(i) -2|\Hess_{\tau G}(\phi)|^2(i),
\end{align*}
where we've used the notation $X_i\cdot Y_i =\langle X_i, Y_i\rangle_{T_i(G)}.$
\mpar
To make this formula look similar to the classical case we add and subtract the term $2\nabla\phi\cdot\nabla\Delta\phi$. Observing that,
$$
\nabla\phi\cdot\Delta_{\tau G}\nabla\phi - \nabla\phi\cdot\nabla\Delta\phi = \nabla\phi\cdot\left(\Delta_{\tau G}-\nabla\Div\right)\nabla\phi
$$
we find,
$$
\tfrac{1}{2}\Delta |\nabla\phi |^2 = \langle\nabla\Delta\phi, \nabla\phi\rangle + |\Hess_{\tau G}(\phi)|^2 + B(\nabla\phi, \nabla\phi)
$$
where,
\begin{align*}
B(\nabla\phi, \nabla\phi)(i) & = \nabla\phi\cdot\left(\left(\Delta_{\tau G}-\nabla\Div\right)\nabla\phi\right)(i) - 2|\Hess_{\tau G}(\phi)|^2(i)\\
& = \langle \nabla\phi, (\Delta_{\tau G}-\nabla\Div)\nabla\phi\rangle_{T_i(G)} - 2\langle\nabla_{\tau G}\nabla\phi,  \nabla_{\tau G}\nabla\phi\rangle_{\mathcal{T}^2_i(G)}.
\end{align*}
Since $\Delta_{\tau G}, \Div,$ and $\nabla_{\tau G}$ are defined on $\mathcal{X}(G)$, this defines a symmetric, function-valued bilinear form on vector fields,
\begin{align*}
B(X, Y)(i) = \tfrac{1}{2}\langle X, &(\Delta_{\tau G}-\nabla\Div)Y\rangle_{T_i(G)} + \tfrac{1}{2}\langle Y, (\Delta_{\tau G} -\nabla\Div)X\rangle_{T_i(G)}\\ 
& - 2\langle \nabla_{\tau G}X, \nabla_{\tau G}Y\rangle_{\mathcal{T}_i^2(G)}
\end{align*}
which plays the role of Ricci curvature in Bochner's identity. 

\begin{remark}
1. Integrating this identity over $G$ we find,
\begin{align*}
\sum_{i\in V_G} B(X, Y)(i) & = \langle X, (\Delta_{\tau G}-\nabla\Div )Y\rangle_{\mathcal{X}(G)} - 2\langle\nabla_{\tau G} X, \nabla_{\tau G}Y\rangle_{\mathcal{X}(\tau G)}\\
& =\langle X, (\Delta_{\tau G}-\nabla\Div) Y\rangle_{\mathcal{X}(G)} - 2\langle X, \Delta_{\tau G} Y\rangle_{\mathcal{X}(G)}\\
& = -\langle X, (\Delta_{\tau G}+\nabla\Div) Y\rangle_{\mathcal{X}(G)}.
\end{align*}

Since,
$$
\Delta_{\tau G} + \nabla\Div =\Div_{\tau G}\nabla_{\tau G}+\nabla_G\Div_G
$$
one sees this is a Hodge operator acting on vector fields on $G$. 

\mpar
2. The bilinear form descends to $C(G)$ as,
\begin{align*}
B(\phi, \psi) & = -\langle \nabla\phi, (\Delta_{\tau G}+\nabla\Div)\nabla\psi\rangle_{\mathcal{X}(G)}\\
& = -\langle\phi,\left(\Div\Delta_{\tau G}\nabla + \Delta^2\right)\psi\rangle_{C(G)}\\
& = \langle\phi, B\psi\rangle_{C(G)},
\end{align*}
where,  according to Proposition 3.7,
$$
B\psi = \tfrac{1}{4}\Delta\left(A\deg\cdot\Delta^2\psi\right)\negthinspace -\tfrac{1}{2}  \Delta\left(\deg\cdot A\deg\cdot\Delta\psi\right)\negthinspace -\Delta^2\psi +2 D_A\psi.
$$

\end{remark}
\subsection*{Acknowledgements} My thanks are due to Aileen Carroll, Bonnie Gordon, Martha March, Larry Susanka, and Gabrielle Wilders for their interest, encouragement and support.

\end{document}